\newcommand{\li}{\text{li}}
\newcommand{\K}{\mathbb{K}}
\newcommand{\PP}{\mathfrak{p}}
\newcommand{\QQ}{\mathbb{Q}}
\newcommand{\N}{\mathbb{N}}
\newcommand{\Z}{\mathbb{Z}}
\theoremstyle{plain}
\newtheorem{thm}{Theorem}
\newtheorem{corollary}[thm]{Corollary}
\newtheorem{proposition}[thm]{Proposition}
\newtheorem{lemma}[thm]{Lemma}
\newtheorem{remark}[thm]{Remark}
\newcommand{\half}{\frac{1}{2}}
\newcommand{\invert}[1]{\frac{1}{#1}}
\newcommand{\abs}[1]{\left\lvert#1\right\rvert}
\newcommand{\paranthesis}[1]{\left(#1\right)}
\renewcommand{\Re}{\operatorname{Re}}
\renewcommand{\Im}{\operatorname{Im}}
\begin{document}

\title[Mertens' Third Theorem for Number Fields]{Mertens' Third Theorem for Number Fields: A New Proof, Cram\'er's Inequality, Oscillations, and Bias}

\author[S.~Hathi]{Shehzad Hathi}
	\address{School of Science, UNSW Canberra, Northcott Drive, Australia ACT 2612} 
	\email{shehzadhathi@outlook.com}

\author[E.~S.~Lee]{Ethan S. Lee}
	\address{School of Science, UNSW Canberra, Northcott Drive, Australia ACT 2612} 
	\email{ethan.s.lee@student.adfa.edu.au}

\maketitle

\begin{abstract}
The first result of our article is another proof of Mertens' third theorem in the number field setting, which generalises a method of Hardy. The second result concerns the sign of the error term in Mertens' third theorem. Diamond and Pintz showed that the error term in the classical case changes sign infinitely often and in our article, we establish this result for number fields assuming a reasonable technical condition. In order to do so, we needed to prove Cram\'er's inequality for number fields, which is interesting in its own right.
Lamzouri built upon Diamond and Pintz's work to prove the existence of the logarithmic density of the set of real numbers $x \ge 2$ such that the error term in Mertens' third theorem is positive, so the third result of our article generalises Lamzouri's results for number fields. We also include numerical investigations for the number fields $\mathbb{Q}(\sqrt{5})$ and $\mathbb{Q}(\sqrt{13})$, building upon similar work done by Rubinstein and Sarnak in the classical case.


\end{abstract}

\section{Introduction}

Suppose that a number field $\K$ has degree $n_{\K}$, discriminant $\Delta_{\K}$, and ring of integers $\mathcal{O}_{\K}$. The Dedekind zeta-function associated to $\K$, denoted $\zeta_{\K}(s)$, is regular throughout $\mathbb{C}$ aside from one pole at $s=1$ which is simple and has residue $\kappa_{\K}$. Recall that the Generalised Riemann Hypothesis (GRH) postulates that every non-trivial zero of $\zeta_{\K}(s)$ lies on the line $\Re(s) = 1/2$. Moreover, the assumption that the positive imaginary parts of these zeros are linearly independent over $\mathbb{Q}$ will be referred to as the Generalised Linear Independence Hypothesis (GLI). Throughout this paper, we use the notations $\ll_\K$ and $O_{\K}$, in which the the implied constant may depend on the invariants of $\K$.

\subsection*{Background}

In 1874, Mertens \cite{mertens1874ein} established the product formula
\begin{equation}\label{eqn:MertensMTT}
    \prod_{p\leq x} \left(1 - \frac{1}{p}\right)^{-1} = e^{\gamma}\log{x} + O(1),
\end{equation}
where $\gamma$ is the Euler--Mascheroni constant.
Without an explicit description of the error term, Lebacque \cite{Lebacque} and Rosen \cite{rosen1999generalization} generalised \eqref{eqn:MertensMTT} for number fields $\K$ with $n_{\K}\geq 2$:
\begin{equation}\label{eqn:RosenMTT}
    \prod_{N(\PP)\leq x} \left(1 - \frac{1}{N(\PP)}\right)^{-1} = e^{\gamma}\kappa_{\K}\log{x} + O_{\K}(1),
\end{equation}
where $\kappa_\K$ is the residue of the pole of $\zeta_\K$ at $s = 1$. The product in \eqref{eqn:RosenMTT} runs over the prime ideals $\PP$ of $\mathcal{O}_{\K}$, where $N(\PP)$ denotes the norm of $\PP$. Garcia and the second author \cite{GarciaLeeRamanujan} have established \eqref{eqn:RosenMTT} with an explicit description of the error term for $x\geq 2$.


In the setting $\K = \mathbb{Q}$, Rosser and Schoenfeld \cite{Rosser} observed that
\begin{equation}\label{eqn:RosserSchoenfeldLamzouri}
    \prod_{p\leq x} \left(1 - \frac{1}{p}\right)^{-1} > e^{\gamma}\log{x},
    \quad\text{for}\quad
    2 \leq x \leq 10^8.
\end{equation}
This is an inequality between the product and main term in \eqref{eqn:MertensMTT}. Building upon this observation, Diamond and Pintz \cite{DiamondPintz} have shown that these quantities actually take turns exceeding one other. More precisely, they showed that
\begin{equation*}
    \sqrt{x} \paranthesis{\prod_{p\leq x} \left(1 - \frac{1}{p}\right)^{-1} - e^{\gamma}\log{x}}
\end{equation*}
attains arbitrarily large positive and negative values as $x \to \infty$. Suppose that
\begin{equation}\label{eqn:mathcalMKDef}
    \mathcal{M}_{\K} = \left\{ x\geq 2: \prod_{N(\PP)\leq x}\left(1 - \frac{1}{N(\PP)}\right)^{-1} > e^{\gamma}\kappa_{\K}\log{x} \right\}.
\end{equation}
Lamzouri \cite{Lamzouri} showed $\mathcal{M}_{\mathbb{Q}}$ and its complement have positive lower logarithmic densities under the Riemann Hypothesis (RH). Moreover, assuming RH and the Linear Independence Hypothesis (LI), the set $\mathcal{M}_{\mathbb{Q}}$ has a logarithmic density which is known to be $\delta(\mathcal{M}_{\mathbb{Q}}) = 0.99999973\ldots$ \cite[Theorem~1.3]{Lamzouri}. Therefore, the error term in Mertens' product formula has a strong bias towards the positive sign. Further, it turns out that the logarithmic density of $\mathcal{M}_{\mathbb{Q}}$ is equal to the logarithmic density of the set of real numbers $x \ge 2$ such that $\pi(x) < \text{Li}(x)$ (see proof of Theorem 1.3 in \cite{Lamzouri}). The latter was calculated conditionally (on RH and LI) by Rubinstein and Sarnak in \cite{rubinstein1994chebyshev}.

\subsection*{Results}

In Section \ref{sec:Preliminaries}, we introduce several results which will be important for proving the main results of this paper. One preliminary result we needed to generalise to number fields is a well-known inequality of Cram\'{e}r; see \cite{Cramer} or \cite[Thm.~13.5]{MontgomeryVaughan}.
This generalisation of Cram\'{e}r's bound is presented in Theorem \ref{thm:CramerBound}, and may be of independent interest.

In Section \ref{sec:HardysApproach}, we provide another proof of \eqref{eqn:RosenMTT}, using a different technique to Lebacque \cite{Lebacque} or Rosen \cite{rosen1999generalization}. Our motivation for sharing this new proof of \eqref{eqn:RosenMTT} is that it is not well known and it generalises a method due to Hardy \cite{HardyNote1927}.\footnote{This method of proof was also suggested to the second author and S.~R.~Garcia during another project by T.~Freiberg, who used a similar technique for a different setting in \cite{BardestaniFreiberg}.}

In Section \ref{sec:Oscillations}, we address \cite[Qn.~16]{GarciaLeeRamanujan}, in which the second author and Garcia raised the question whether the difference,
\begin{equation*}
    \Delta = \prod_{N(\PP)\leq x}\left(1 - \frac{1}{N(\PP)}\right)^{-1} - e^{\gamma}\kappa_{\K}\log{x},
\end{equation*}
changes sign infinitely often when $\K \neq \mathbb{Q}$. To this end, we prove Theorem \ref{thm:Main}, which is a number field analogue of \cite[Thm.~1.1]{DiamondPintz}, that demonstrates $\Delta$ \textit{does} change sign infinitely often when $\K \neq \mathbb{Q}$, assuming a reasonable, technical condition.

\begin{thm}\label{thm:Main}
If there exists a non-real zero $\sigma_\K$ of $\zeta_\K(s)$ such that $1/2 \le \Re(\sigma_\K) < 1$ and there is no zero in the right-half plane $\Re(s) > \Re(\sigma_\K)$, then the quantity
\begin{equation*}
    E_1(x) := \sqrt{x}\left\{\prod_{N(\PP)\leq x}\left(1 - \frac{1}{N(\PP)}\right)^{-1} - e^{\gamma}\kappa_{\K}\log{x}\right\}
\end{equation*}
attains arbitrarily large positive and negative values as $x\to\infty$.
\end{thm}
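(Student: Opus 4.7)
The strategy is to mimic the argument of Diamond and Pintz \cite{DiamondPintz}, suitably generalised to the number field setting by leveraging the explicit formula for $\psi_{\K}(x)$ together with Theorem~\ref{thm:CramerBound} (Cram\'er's inequality for number fields). The first step is to convert the product in $E_1(x)$ into a logarithmic form. Setting
\begin{equation*}
r_{\K}(x) := \log\prod_{N(\PP)\leq x}\left(1-\frac{1}{N(\PP)}\right)^{-1} - \log\bigl(e^{\gamma}\kappa_{\K}\log x\bigr),
\end{equation*}
we have $r_{\K}(x) = o(1)$ by \eqref{eqn:RosenMTT}, and a Taylor expansion yields
\begin{equation*}
\prod_{N(\PP)\leq x}\left(1-\frac{1}{N(\PP)}\right)^{-1} - e^{\gamma}\kappa_{\K}\log x = e^{\gamma}\kappa_{\K}\log x\cdot\bigl(r_{\K}(x) + O(r_{\K}(x)^{2})\bigr).
\end{equation*}
Thus the theorem reduces to showing that $\sqrt{x}\log x\cdot r_{\K}(x)$ takes arbitrarily large values of both signs, once the quadratic remainder is verified to be negligible (which follows from the hypothesised zero-free region giving $r_{\K}(x) \ll x^{-1/2}$ up to logarithmic factors).

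The second step is to derive an integral representation of $r_{\K}(x)$ in terms of zeros of $\zeta_{\K}(s)$. Expanding $-\log\bigl(1 - N(\PP)^{-1}\bigr) = \sum_{k\geq 1}\bigl(kN(\PP)^{k}\bigr)^{-1}$ and separating the $k=1$ term gives
\begin{equation*}
\log\prod_{N(\PP)\leq x}\left(1-\frac{1}{N(\PP)}\right)^{-1} = \sum_{N(\PP)\leq x}\frac{1}{N(\PP)} + C_{\K} + O(x^{-1}),
\end{equation*}
where $C_{\K} = \sum_{\PP}\sum_{k\geq 2}\bigl(kN(\PP)^{k}\bigr)^{-1}$ is a convergent constant. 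Consequently $r_{\K}(x)$ agrees with $\sum_{N(\PP)\leq x}1/N(\PP) - \log\log x - M_{\K}$ up to a decaying error, where $M_{\K} := \gamma + \log\kappa_{\K} - C_{\K}$ plays the role of Mertens' constant for $\K$. Partial summation now expresses this in terms of $\psi_{\K}(t) - t$, and plugging in the explicit formula
\begin{equation*}
\psi_{\K}(x) = x - \sum_{\rho}\frac{x^{\rho}}{\rho} + O(\log x)
\end{equation*}
produces an integral representation of $r_{\K}(x)$ as a sum over non-trivial zeros of $\zeta_{\K}(s)$. The hypothesis of the theorem singles out the pair $\{\sigma_{\K},\overline{\sigma_{\K}}\}$ as the rightmost non-trivial zeros, so heuristically their contributions dominate and oscillate.

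To convert this heuristic oscillation into a rigorous conclusion, the third step is to invoke a Landau--Ingham type oscillation theorem. Supposing for contradiction that $\liminf_{x\to\infty} E_1(x) > -\infty$, one applies Landau's theorem to the Mellin transform of a suitable non-negative modification of $E_1(x)$. Using the representation above to express the transform as a meromorphic function involving $-\zeta_{\K}'/\zeta_{\K}$, the rightmost singularities in the half-plane $\Re(s) \geq 1/2$ are precisely the non-real pair $\sigma_{\K}, \overline{\sigma_{\K}}$, contradicting Landau's theorem, which forces a real singularity at the abscissa of convergence. A symmetric argument rules out $\limsup_{x\to\infty} E_1(x) < \infty$.

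The main obstacle will be the rigorous truncation of the zero sum in the explicit formula: one must show that zeros $\rho$ with $\Re(\rho) < \Re(\sigma_{\K})$ do not obscure the oscillation from the rightmost pair, and that the Mellin transform of the modified error enjoys the required analytic continuation up to the line $\Re(s) = \Re(\sigma_{\K})$. This is precisely where Theorem~\ref{thm:CramerBound} enters, providing the quantitative mean-square control on $\zeta_{\K}$ needed to justify these manipulations; aligning its output with the contour shifting in the Mellin-transform step will be the most delicate technical point in the proof.
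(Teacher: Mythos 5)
Your proposal captures the high-level scaffolding---reduce $E_1$ to a logarithmic error, identify its Mellin transform, and invoke Landau-type oscillation---but it misses the essential dichotomy that makes the paper's proof go through. The paper splits the argument according to whether GRH holds. In the non-GRH case (so the rightmost zero $\sigma_\K$ has $\Re(\sigma_\K) > 1/2$ strictly), Landau's oscillation theorem applies cleanly: the Mellin transform $\widehat{A}(s) = \tfrac{1}{s}\log\frac{s\,\zeta_\K(s+1)}{s+1}$ has a non-real singularity at $\sigma^* = \sigma_\K - 1$, no singularity to its right, and its abscissa $\Re(\sigma^*)$ is a regular point, which contradicts eventual one-signedness. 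In the GRH case the paper abandons the Landau route entirely and instead shows $A(x) = \Upsilon^*(x)/x - \int_x^\infty \Upsilon^*(t)\,t^{-2}\,dt$, bounds the integral by $\ll 1/(\sqrt{x}\log x)$ using Cram\'er's inequality, and then imports the generalised Littlewood $\Omega_\pm$ estimate (Lemma~\ref{lem:Littlewood_generalised}) to conclude $A(x) = \Omega_\pm\!\left(\frac{\log\log\log x}{\sqrt{x}\log x}\right)$.

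Your argument as written tries to run the Landau contradiction in all cases, and this does not survive the GRH case. When $\Re(\sigma_\K) = 1/2$ there are infinitely many singularities of $\widehat{A}+\eta\widehat{B}$ on the line $\Re(s) = -1/2$, and in particular the real point $s = -1/2$ may itself be singular (nothing in the hypothesis forbids a real zero of $\zeta_\K$ at $s = 1/2$ sitting alongside the postulated non-real $\sigma_\K$). In that situation Landau's theorem gives no contradiction at all, and your assertion that ``the rightmost singularities \ldots are precisely the non-real pair $\sigma_\K,\overline{\sigma_\K}$'' is simply false. Relatedly, you deploy Theorem~\ref{thm:CramerBound} as a general-purpose tool for justifying truncations and contour shifts ``in all cases,'' but that theorem is itself conditional on GRH, so it cannot serve any unconditional role; in the paper it appears only inside the GRH branch, paired with the Littlewood-type $\Omega_\pm$ result that your proposal omits entirely. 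Without that second branch, the argument has a genuine hole.
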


The technical condition in Theorem \ref{thm:Main} has been carefully worded, so that one can apply Landau's oscillation theorem at the opportune moment; it ensures that the right-most zero of $\zeta_\K(s)$ is not a Landau--Siegel (or exceptional) zero.

\begin{remark} \label{rem:exceptionalzeroexists}
In the case where an exceptional zero exists, we expect an additional term in $E_1(x)$ (corresponding to the exceptional zero) to recover a similar type of oscillatory behaviour. However, we have not pursued that line of investigation in this paper.
\end{remark}

In Section \ref{sec:Bias}, we consider the set $\mathcal{M}_{\K}$.  Recall that the \textit{lower} and \textit{upper logarithmic densities} of a set $S \subset [0,\infty)$ are defined respectively as
\begin{equation*}
    \underline{\delta}(S) = \liminf_{x \to \infty} \invert{\log x} \int_{t \in S \cap [2,x]} \frac{dt}{t} \;\; \text{ and } \;\; \overline{\delta}(S) = \limsup_{x \to \infty} \invert{\log x} \int_{t \in S \cap [2,x]} \frac{dt}{t}.
\end{equation*}
When $\underline{\delta}(S) = \overline{\delta}(S) = \delta(S)$, we say that $\delta(S)$ is the \textit{logarithmic density} of $S$. Generalising Lamzouri's work in \cite{Lamzouri}, we show conditionally that $\mathcal{M}_{\K}$ and its complement are unbounded.

\begin{thm}\label{thm:lowerupperdensity}
Assume GRH. Then, for any number field $\K$, $\underline{\delta}(\mathcal{M}_{\K}) > 0$ and $\overline{\delta}(\mathcal{M}_{\K}) < 1$.
\end{thm}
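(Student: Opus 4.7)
The plan is to adapt Lamzouri's argument for $\QQ$ in \cite{Lamzouri} to the number field setting, drawing on the tools already established earlier in this paper. Set
\begin{equation*}
    \mathcal{E}_\K(x) := \log \prod_{N(\PP) \leq x}\paranthesis{1 - \frac{1}{N(\PP)}}^{-1} - \log(e^{\gamma}\kappa_{\K}\log x),
\end{equation*}
so that $x \in \mathcal{M}_\K$ if and only if $\mathcal{E}_\K(x) > 0$. The question is thereby reduced to the sign behaviour of $\mathcal{E}_\K(x)$.

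The first step would be to derive an explicit formula for $\mathcal{E}_\K(x)$ under GRH. Expanding $-\log(1 - 1/N(\PP)) = \sum_{k \geq 1} N(\PP)^{-k}/k$, separating the $k = 1$ contribution, and absorbing the absolutely convergent $k \geq 2$ tail into the constant $\log \kappa_\K + \gamma$ via $\log \zeta_\K(s)$ near $s = 1$, one is reduced to controlling a Mertens-type partial sum $\sum_{N(\PP) \leq x} N(\PP)^{-1} - \log \log x - M_\K$. Partial summation from the explicit formula for the number-field Chebyshev function then yields, under GRH,
\begin{equation*}
    \sqrt{x}\, \mathcal{E}_\K(x) = B(\log x) + o_\K(1), \qquad B(u) := -\sum_{\rho} \frac{e^{(\rho - 1/2)u}}{\rho(\rho - 1)},
\end{equation*}
with $\rho$ running over the non-trivial zeros of $\zeta_\K(s)$. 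The absolute convergence of $B$ would be secured by the Weyl-type zero-counting $N_\K(T) \ll n_\K T \log T$ together with the Cram\'er-type estimate Theorem \ref{thm:CramerBound}, which also controls the truncation error in the explicit formula. Under GRH each $\rho = 1/2 + i\gamma$, so $B(u)$ is a real-valued, uniformly almost periodic function of $u \in \RR$.

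The second step is to verify that $B$ attains values of both signs. Since GRH forces every non-trivial zero of $\zeta_\K(s)$ onto $\Re(s) = 1/2$ and forbids an exceptional zero, the technical hypothesis of Theorem \ref{thm:Main} is satisfied (take $\sigma_\K$ to be any non-real zero on the critical line), so $E_1(x)$ oscillates with arbitrarily large positive and negative values. The identity $F(x) = e^{\gamma}\kappa_\K(\log x)\bigl(e^{\mathcal{E}_\K(x)} - 1\bigr)$ shows that $\text{sgn}(E_1(x)) = \text{sgn}(\mathcal{E}_\K(x))$ for large $x$, so $B(\log x)$ must also take values of both strict signs on an unbounded set.

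The third step converts this into a density statement. For a uniformly almost periodic function $B \colon \RR \to \RR$ that attains both a strictly positive value and a strictly negative value, Bohr's recurrence theorem implies that both $\{u : B(u) > 0\}$ and $\{u : B(u) < 0\}$ have positive lower natural density: given $u_0$ with $B(u_0) > 0$, every almost-period $\tau$ satisfying $\|B(\cdot + \tau) - B\|_\infty < B(u_0)/2$ preserves positivity at $u_0$, and the set of such $\tau$ is relatively dense. Passing from natural density in $u = \log x$ back to logarithmic density in $x$ then gives $\underline{\delta}(\mathcal{M}_\K) > 0$ and $\overline{\delta}(\mathcal{M}_\K) = 1 - \underline{\delta}(\mathcal{M}_\K^c) < 1$. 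The principal obstacle should be the first step: producing the number-field explicit formula for $\mathcal{E}_\K(x)$ with an error small enough to survive multiplication by $\sqrt{x}$, which is precisely where the sharper Cram\'er bound of Theorem \ref{thm:CramerBound} is essential, generic zero-density estimates being too weak; a secondary bookkeeping task is confirming that the $k \geq 2$ contributions together with the Mertens constant of the $1/N(\PP)$-sum combine to exactly $\log \kappa_\K + \gamma$, which is consistent with \eqref{eqn:RosenMTT} but must be set down carefully.
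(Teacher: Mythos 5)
Your second and third steps both rest on the claim that, after normalisation, the explicit formula for the Mertens error has an \emph{absolutely} convergent sum over zeros, giving a uniformly (Bohr) almost-periodic function $B(u)$ to which Bohr recurrence applies. This is where the argument breaks. The denominator in your $B(u)$ is $\rho(\rho-1)$, with $|\rho(\rho-1)| = \tfrac{1}{4}+\gamma^2$ under GRH, so $\sum_\rho 1/|\rho(\rho-1)|$ converges and $B$ is indeed a uniform limit of trigonometric polynomials. But the correct explicit formula for the Mertens error term (Proposition~\ref{prop:LamzouriProp2.1Gen}, Corollary~\ref{cor:LamzouriProp2.1Gen}) has only a single factor $\varrho - 1$ in the denominator: $E_\K(x) = 1 + 2\Re\sum_{0<\gamma_n<T} x^{i\gamma_n}/(-\tfrac12+i\gamma_n) + O(\cdot)$, and $\sum_n 1/|{-\tfrac12+i\gamma_n}|\asymp\sum_n 1/\gamma_n$ diverges. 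Tracing the derivation through Lemma~\ref{lem:LamLem2.4Gen}, the relevant integral is $\int_1^\infty x^{-t}/(\varrho-t)\,dt = \bigl(x\log x\,(\varrho-1)\bigr)^{-1} + O(\cdot)$; there is no mechanism producing the extra factor of $\varrho$ you write. (Your normalisation is also off: the natural object is $\sqrt{x}\,\log x\cdot\mathcal{E}_\K(x)=E_\K(x)$, not $\sqrt{x}\,\mathcal{E}_\K(x)$, which tends to $0$.)

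Because the sum over zeros is only conditionally convergent, $E_\K(e^y)$ is not uniformly almost periodic, and Bohr recurrence does not apply; one only gets $B^2$-almost periodicity and hence a limiting distribution (this is Theorem~\ref{thm:limitingdistribution} via \cite{akbary2014limiting}), which by itself does not preclude the density being $0$ or $1$. The paper instead reduces, via Corollary~\ref{cor:LamzouriProp2.1Gen} with $T=e^Y$, to the finite exponential sum $2\sum_{0<\gamma_n<e^Y}\sin(\gamma_n y)/\gamma_n$ up to a bounded additive error $A$, and then invokes Littlewood's method as carried out in \cite[Sec.~2.2]{rubinstein1994chebyshev} to obtain positive lower bounds of the form $c_1\exp(-\exp(-c_2\lambda))$ for the (upper and lower) densities of the sets where the sum exceeds $\lambda$ or is below $-\lambda$. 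That measure-theoretic lower bound for conditionally convergent exponential sums is precisely what replaces your Bohr-recurrence step, and it is the genuinely hard input you have sidestepped by assuming absolute convergence. Finally, your appeal to Theorem~\ref{thm:Main} is logically superfluous in the GRH regime --- the Littlewood bound already supplies both the oscillation and the density --- and in any case it gives sign changes but not density without some additional quantitative control.
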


Moreover, assuming GRH and GLI, we calculate the logarithmic density (see Table \ref{table:logdensity}) for two quadratic fields, $\K = \mathbb{Q}(\sqrt{5})$ and $\K = \mathbb{Q}(\sqrt{13})$, adapting the numerical work done by Rubinstein and Sarnak in \cite{rubinstein1994chebyshev} concerning Chebyshev's bias. These computations are useful beyond Mertens' third theorem for number fields because the logarithmic density of $\mathcal{M}_{\K}$ is equal to $\delta(P_\K)$, the logarithmic density of the set of reals $x \ge 2$ such that the error term in the prime ideal theorem is negative, i.e. $\pi_\K(x) - \text{Li}(x) < 0$. We also show that the logarithmic density of $\mathcal{M}_{\K}$ (and consequently $P_\K$) goes to $1/2$ as the discriminant of the quadratic field grows. This phenomenon is referred to as \textit{dissipation of bias}.




\subsection*{Acknowledgements}
We thank Tim Trudgian and Youness Lamzouri for helpful feedback on this project.
We also thank Tristan Freiberg for bringing Hardy's approach in \cite{HardyNote1927} to the second author's attention, and Stephan Garcia for helpful comments and discussions on its implementation. We also thank Greg Martin and Peter Humphries for the helpful correspondence, especially concerning Remark \ref{rem:exceptionalzeroexists}.

\section{Preliminary Results}\label{sec:Preliminaries}

\subsection{The Dedekind zeta-function}\label{ssec:DZF}

Suppose that the degree $n_{\K} = r_1 + 2r_2$, in which $r_1$ is the number of real places and $r_2$ is the number of complex places of $\K$. Further, suppose $r = r_1 + r_2 - 1$, $R_{\K}$ is the regulator of $\K$, and $h_{\K}$ is the class number of $\K$. Landau established all of the knowledge we state here in \cite{LandauEinfuehrung}.

The Dedekind zeta-function is denoted and defined for $\sigma > 1$ by
\begin{equation*}
    \zeta_{\K}(s) = \sum_{\mathfrak{a}} N(\mathfrak{a})^{-s} = \prod_{\PP} \left(1 - N(\PP)^{-s}\right)^{-1},
\end{equation*}
which converges absolutely. Now, $\zeta_{\K}(s)$ may be continued to the entire plane $\mathbb{C}$, apart from a simple pole at $s=1$ using a functional equation. That is, $\zeta_{\K}(s)$ is regular for all $s\in\mathbb{C}$, aside from one simple pole at $s=1$ whose residue is
\begin{equation*}
    \kappa_{\K} = \frac{2^{r_1 + r_2}\pi^{r_2}h_{\K}R_{\K}}{w_{\K}|\Delta_{\K}|^{\frac{1}{2}}};
\end{equation*}
this is called the analytic class number formula.

At $s=0$, $\zeta_{\K}(s) = 0$ as long as $r = r_1 + r_2 - 1 > 0$ and this zero at $s=0$ has order $r$. If $r=0$, then $\K$ is $\mathbb{Q}$ satisfying $(r_1,r_2) = (1,0)$ \textit{or} $\K$ is an imaginary quadratic field satisfying $(r_1,r_2) = (0,1)$.
Moreover, $\zeta_{\K}(s) = 0$ whenever $s$ is a negative, \textit{even} integer (these zeros have order $r_1+r_2$) or $s$ is a negative, \textit{odd} integer (these zeros only occur when $r_2>0$ and they have order $r_2$). Alongside the zero at $s=0$ (whenever $r>0$), these zeros are called \textit{trivial}. The \textit{non-trivial} zeros of $\zeta_{\K}(s)$ satisfy $0 < \Re(s) < 1$, and we note that there might exist a single, simple, real zero $0 < \beta_0 < 1$, which is called the \textit{exceptional zero}. Explicit bounds for $\beta_0$ may be found in \cite{Ahn, Kadiri, Lee}.

\subsection{The prime ideal theorem}

Let $s = \sigma + it$, $\mathfrak{a}$ denote an integral ideal of $\K$, and $\PP$ denote a prime ideal of $\K$.
Suppose that
\begin{equation*}
    \psi_{\K}(x) = \sum_{N(\mathfrak{a})\leq x} \Lambda_{\K}(\mathfrak{a})
    \qquad\text{where}\qquad
    \Lambda_{\K}(\mathfrak{a}) =
    \begin{cases}
    \log{N(\PP)} & \text{if }\mathfrak{a} = \PP^m,\\
    0 & \text{otherwise.}
    \end{cases}
\end{equation*}

The prime ideal theorem was initially proved by Landau in \cite{Landau}. Explicit conditional versions of this theorem have been established in \cite{GrenieMolteniII}, and an explicit, unconditional generalisation has been established by Lagarias and Odlyzko in \cite{LagariasOdlyzko}. Corollary \ref{thm:explicit_formula_final} is a special case of \cite[Thm.~7.1]{LagariasOdlyzko}, and can be obtained using Kadiri and Ng's zero-density estimate from \cite{KadiriNg} with $L=K=\K$ in Lagarias and Odlyzko's notation.\footnote{Greni\'{e} \textit{et al.} make the same claim in \cite[Eqn.~(6)]{GrenieMolteniPPI}.}



\begin{corollary}\label{thm:explicit_formula_final}
Suppose $\K$ is a number field such that $n_{\K} \geq 2$ and $2 \leq T \leq x$. Then
\begin{equation}\label{eqn:explicit_formula_final}
    \psi_{\K}(x) = x - \sum_{|\gamma| \leq T} \frac{x^{\varrho}}{\varrho} + R_{\K}(x,T),
\end{equation}
where $R_{\K}(x,T) \ll \frac{x \log{x}}{T} \left(n_{\K} \log{x} + \log{|\Delta_{\K}|}\right) \ll \frac{x \log^2{x}}{T}$ and $\gamma$ denotes the ordinate of a non-trivial zero $\varrho$ of $\zeta_{\K}(s)$.
\end{corollary}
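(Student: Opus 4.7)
The plan is to obtain this corollary by specialising a general explicit formula of Lagarias and Odlyzko and then cleaning up the error term using the Kadiri--Ng zero-density estimate. Since \cite[Thm.~7.1]{LagariasOdlyzko} is stated for $\psi_K(x)$ attached to an arbitrary finite Galois extension $L/K$, the first step is simply to set $L = K = \K$ in their statement. This yields an identity of the form $\psi_\K(x) = x - \sum_{|\gamma|\le T} x^\varrho/\varrho + E(x,T,\K)$, in which $E(x,T,\K)$ is an explicit but somewhat cumbersome remainder.

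The second step is to simplify $E(x,T,\K)$ to the shape claimed in the corollary. The Lagarias--Odlyzko remainder naturally decomposes into three pieces: a ``contour'' piece arising from the truncated Perron formula and the horizontal segments of the shifted contour, which is already of size $\frac{x\log^2 x}{T}$ up to $\K$-dependent factors; a contribution from the trivial zeros and from the constant term at $s=0$, which is of strictly lower order in $x$; and a ``strip'' piece coming from zeros lying within unit distance of the truncation height $\pm T$. The first two pieces already fit inside the claimed bound once one records the dependence on $n_\K$ and $\log|\Delta_\K|$ via the classical Riemann--von~Mangoldt-type estimate $N_\K(T+1) - N_\K(T) \ll n_\K \log T + \log|\Delta_\K|$.

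The third step is to control the strip contribution. Here I would invoke the zero-density estimate of Kadiri and Ng from \cite{KadiriNg} (applied with $L=K=\K$ in the Lagarias--Odlyzko framework) which supplies precisely the averaging needed to absorb the near-$T$ zeros into an error of size $\frac{x\log x}{T}(n_\K\log x + \log|\Delta_\K|)$. Combined with the first two pieces, this gives the first bound in the statement. The weaker bound $\ll \frac{x\log^2 x}{T}$ then follows simply by absorbing $n_\K$ and $\log|\Delta_\K|$ into the implied constant, which is permitted under the $O_\K$-convention fixed in the Introduction.

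The main obstacle is bookkeeping rather than conceptual: \cite[Thm.~7.1]{LagariasOdlyzko} is stated in substantial generality with several auxiliary parameters, so the technical work lies in carefully specialising those parameters to the case $L = K = \K$ and in verifying that the Kadiri--Ng bound is quantitatively strong enough to produce the stated dependence on $n_\K$ and $\log|\Delta_\K|$. As noted in the footnote, Greni\'e and Molteni carry out the analogous reduction in \cite{GrenieMolteniPPI}, which serves as a reliable roadmap for the details.
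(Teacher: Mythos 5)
Your proposal matches the paper's approach exactly: the paper itself gives no proof beyond observing that Corollary~\ref{thm:explicit_formula_final} is the special case $L=K=\K$ of \cite[Thm.~7.1]{LagariasOdlyzko} combined with the Kadiri--Ng zero-density estimate, citing \cite[Eqn.~(6)]{GrenieMolteniPPI} for the same reduction. Your elaboration of the three-way decomposition of the remainder and the absorption of $n_\K$ and $\log|\Delta_\K|$ into the $O_\K$-constant is sound bookkeeping that the paper leaves implicit.
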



\subsection{Cram\'er's inequality for number fields}

The next result we require is a generalisation of Cram\'er's inequality for number fields, which we present in Theorem \ref{thm:CramerBound}. A consequence of Theorem \ref{thm:CramerBound} is that $\psi_{\K}(x) = x + O(x^{1/2})$ on average over any interval $[x,2x]$ for $x \ge 2$.

\begin{thm}\label{thm:CramerBound}
Assume GRH. For $x\geq 2$, we have
\begin{equation*}
    \int_x^{2x} \paranthesis{\psi_{\K}(t)-t}^2 dt  \ll x^{2}.
\end{equation*}
\end{thm}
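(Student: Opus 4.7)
The plan is to apply the truncated explicit formula from Corollary \ref{thm:explicit_formula_final} with parameter $T = x$, then analyze the resulting $L^2$ integral via the diagonal/off-diagonal decomposition familiar from Cram\'er's original proof (cf.\ \cite[Thm.~13.5]{MontgomeryVaughan}), adapting each ingredient to the Dedekind setting.

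First, Corollary \ref{thm:explicit_formula_final} gives
$$\psi_\K(t) - t = -\sum_{|\gamma| \leq T} \frac{t^\varrho}{\varrho} + R_\K(t,T),$$
with $R_\K(t,T) \ll t(\log t)^2 / T$. Using $(a+b)^2 \leq 2a^2 + 2b^2$, the integral $\int_x^{2x}(\psi_\K(t) - t)^2 dt$ splits into a main part (the square of the sum over zeros) and a remainder. The remainder satisfies $\int_x^{2x} R_\K(t,T)^2\, dt \ll x^3 (\log x)^4 / T^2$, which is $\ll x^2$ provided $T \geq x^{1/2}(\log x)^2$; the choice $T = x$ works.

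Next, GRH gives $\varrho = \tfrac{1}{2} + i\gamma$, so $t^\varrho = t^{1/2} t^{i\gamma}$, and
$$\left|\sum_{|\gamma|\leq T} \frac{t^\varrho}{\varrho}\right|^2 = t\sum_{\gamma_1,\gamma_2} \frac{t^{i(\gamma_1-\gamma_2)}}{\varrho_1\overline{\varrho_2}}.$$
Integrating over $[x,2x]$, the diagonal terms ($\gamma_1 = \gamma_2$) contribute $\tfrac{3}{2}x^2 \sum_{|\gamma|\leq T} |\varrho|^{-2} \ll_\K x^2$, since the zero-counting bound $N_\K(T+1) - N_\K(T) \ll_\K \log(T+2)$ forces $\sum_\varrho |\varrho|^{-2}$ to converge. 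For off-diagonal terms, $\int_x^{2x} t^{1+i(\gamma_1-\gamma_2)} dt \ll x^2/(1+|\gamma_1-\gamma_2|)$, so the off-diagonal contribution is bounded by a multiple of $x^2$ times
$$\Sigma := \sum_{\substack{|\gamma_1|,|\gamma_2|\leq T \\ \gamma_1 \neq \gamma_2}} \frac{1}{|\varrho_1||\varrho_2|(1+|\gamma_1-\gamma_2|)}.$$

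The main obstacle will be showing $\Sigma \ll_\K 1$ (rather than $\Sigma \ll_\K (\log x)^c$, which would spoil the clean bound). The plan is to fix $\gamma_1$ and split the inner sum over $\gamma_2$ into the ranges $|\gamma_2| \leq |\gamma_1|/2$, $|\gamma_1|/2 < |\gamma_2| \leq 2|\gamma_1|$, and $|\gamma_2| > 2|\gamma_1|$; on each range one estimates using $|\varrho_2| \asymp 1+|\gamma_2|$ together with the short-interval density bound $N_\K(T+1) - N_\K(T) \ll_\K \log(T+2)$, obtaining an inner-sum bound $\ll_\K (\log(|\gamma_1|+2))^2/(1+|\gamma_1|)$. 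Summing against $1/|\varrho_1|$ yields
$$\Sigma \ll_\K \sum_\varrho \frac{(\log(|\gamma|+2))^3}{(1+|\gamma|)^2} \ll_\K 1,$$
by comparison with $\sum_{k\geq 1} (\log k)^3/k^2$ once the zeros are grouped into unit intervals. Combining everything gives $\int_x^{2x}(\psi_\K(t)-t)^2 dt \ll_\K x^2$, which is the desired inequality.
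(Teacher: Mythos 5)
Your proposal is correct and follows essentially the same Cram\'er--Montgomery--Vaughan strategy as the paper: truncate the explicit formula (Corollary \ref{thm:explicit_formula_final}), square, and control the resulting double sum over zeros using the short-interval zero count of Lemma \ref{lem:useful_for_later}, exactly the content of Lemma \ref{lem:sum_is_finite}. The only differences are cosmetic --- you fix $T=x$ where the paper takes $T=t$, you use $(a+b)^2 \le 2a^2+2b^2$ to bypass the cross term that the paper bounds separately via GRH and a partial-summation estimate for $\sum_{|\gamma|\le t}1/\varrho$, and you spell out a diagonal/off-diagonal split that the paper handles in a single estimate.
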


Once Theorem \ref{thm:CramerBound} is established, the Cauchy--Schwarz inequality ensures that
\begin{equation*}
    \paranthesis{\int_1^x \abs{\psi_{\K}(t)-t} dt}^2 \le x \int_1^x \paranthesis{\psi_{\K}(t)-t}^2 dt.
\end{equation*}
Therefore,
\begin{equation}\label{eqn:CramerBoundApplication}
    \int_x^{2x} \abs{\psi_{\K}(t)-t} dt \ll \int_1^{2x} \abs{\psi_{\K}(t)-t} dt \ll x^{3/2},
\end{equation}
which is the form of Cram\'er's inequality used in \cite{DiamondPintz}.

\begin{remark}
The implied constant in Theorem \ref{thm:CramerBound} is of independent interest. In the classical setting, Brent \textit{et al.} \cite{brent2021the} estimate this to be $\le 0.8603$.
\end{remark}

Before we can prove Theorem \ref{thm:CramerBound}, we need two lemmas; the first is given in Lemma \ref{lem:useful_for_later}.

\begin{lemma}\label{lem:useful_for_later}
Suppose that $N_{\K}(T)$ is the number of non-trivial zeros (counted with multiplicity) of $\zeta_{\K}(s)$ up to height $T \geq 1$. We have
\begin{equation*}
    N_{\K}(T+1) - N_{\K}(T) \ll \log(T+1)
\end{equation*}
\end{lemma}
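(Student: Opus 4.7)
The plan is to imitate the classical proof of the analogous bound for $\zeta(s)$, namely the argument via the Hadamard product of $\zeta_\K(s)$ evaluated at a point in the half-plane of absolute convergence. Specifically, I would take $s = 2 + iT$ and work with the partial fraction expansion of the logarithmic derivative
\begin{equation*}
    -\frac{\zeta_\K'(s)}{\zeta_\K(s)} = \frac{1}{s} + \frac{1}{s-1} - B_\K + \log A_\K + \frac{r_1}{2}\frac{\Gamma'(s/2)}{\Gamma(s/2)} + r_2\frac{\Gamma'(s)}{\Gamma(s)} - \sum_\varrho \left(\frac{1}{s-\varrho} + \frac{1}{\varrho}\right),
\end{equation*}
which is Landau's formula for $\zeta_\K(s)$ (see \cite[Satz 180]{LandauEinfuehrung}), combined with the fact that $\operatorname{Re}(B_\K) = -\sum_\varrho \operatorname{Re}(1/\varrho)$.

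First I would observe that at $s = 2 + iT$ the left-hand side is uniformly bounded in $T$, because $\sigma = 2$ lies within the region of absolute convergence of the Euler product. Next, I would apply Stirling's formula to the gamma factors to show that the digamma contributions are $O_\K(\log(T+2))$, since $|s/2|, |s| \asymp T$ for large $T$. Taking real parts in Landau's formula then yields
\begin{equation*}
    \sum_\varrho \operatorname{Re}\!\left(\frac{1}{s-\varrho} + \frac{1}{\varrho}\right) \ll_\K \log(T+2).
\end{equation*}

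The crucial observation is that every term on the left is non-negative: writing $\varrho = \beta + i\gamma$ with $0 < \beta < 1$, both $\operatorname{Re}(1/\varrho) = \beta/|\varrho|^2$ and $\operatorname{Re}(1/(s-\varrho)) = (2-\beta)/((2-\beta)^2 + (T-\gamma)^2)$ are positive. Hence I can discard all but the terms with $|\gamma - T| \leq 1$ without destroying the inequality; for those zeros, $\operatorname{Re}(1/(s-\varrho)) \geq 1/((2-\beta)^2 + 1) \geq 1/5$. Therefore the number of zeros of $\zeta_\K(s)$ with $|\gamma - T| \leq 1$ is $\ll_\K \log(T+2)$, and since $\{\gamma : T < \gamma \leq T+1\}$ is a subset of $\{\gamma : |\gamma - T| \leq 1\}$, the claim $N_\K(T+1) - N_\K(T) \ll \log(T+1)$ follows (noting that for small $T \geq 1$ the bound is trivial, so one may freely replace $\log(T+2)$ by $\log(T+1)$ up to absorbing constants).

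I do not expect a genuine obstacle here, as the argument is essentially the standard one from \cite[Lem.~6.2]{MontgomeryVaughan}; the only points requiring care are the bookkeeping of the gamma factors (where the dependence on $n_\K$ enters the implicit constant) and ensuring the partial fraction expansion above is available for $\zeta_\K$, which is indeed given by Landau in \cite{LandauEinfuehrung}.
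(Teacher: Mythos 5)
Your proof is correct, but it takes a genuinely different route from the paper's. The paper invokes the explicit Riemann--von Mangoldt formula for $\zeta_\K$ due to Kadiri and Ng,
\begin{equation*}
\left| N_\K(T) - \frac{T}{\pi}\log\!\left(|\Delta_\K|\left(\frac{T}{2\pi e}\right)^{n_\K}\right) \right| \leq C_1\left(\log|\Delta_\K| + n_\K \log T\right) + C_2\, n_\K + C_3,
\end{equation*}
and obtains the bound by simply differencing $N_\K(T+1) - N_\K(T)$ and absorbing the resulting terms into $\ll \log(T+1)$. Your argument bypasses that asymptotic entirely: you work with Landau's partial-fraction expansion of $-\zeta_\K'/\zeta_\K$ at $s = 2 + iT$, exploit the non-negativity of $\Re\bigl(1/(s-\varrho)\bigr)$ and $\Re(1/\varrho)$ to discard all but the zeros with $|\gamma - T| \leq 1$, and note each surviving term is $\geq 1/5$. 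This is the classical first-principles proof, and is essentially the argument one would use to \emph{establish} the Riemann--von Mangoldt formula for $\zeta_\K$ in the first place, so the paper's approach sits downstream of yours rather than the other way around. The paper's route is shorter given that the Kadiri--Ng estimate is already being imported for other purposes, while yours is more self-contained, relying only on the Hadamard factorisation and Stirling's estimate for the digamma function; both correctly track the dependence on $n_\K$ and $\log|\Delta_\K|$ inside the implied constant.
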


\begin{proof}
Kadiri and Ng \cite{KadiriNg} showed that there exist constants $C_1$, $C_2$, $C_3$ such that
\begin{equation}\label{eqn:z_d_estimate}
    \left| N_{\K}(T) - \frac{T}{\pi}\log\left(|\Delta_{\K}|\left(\frac{T}{2\pi e}\right)^{n_{\K}}\right) \right|
    \leq \underbrace{C_1 \left(\log{|\Delta_{\K}|} + n_{\K}\log{T}\right) + C_2\, n_{\K} + C_3}_{r_{\K}(T)}.
\end{equation}
Trudgian \cite{Trudgian}, and more recently Hasanalizade \textit{et al.} \cite{HasanalizadeShenWong}, provided explicit constants for $C_i$, but these are not necessary for our purposes.
It follows that
\begin{align*}
    N_{\K}(T+1) &- N_{\K}(T)\\
    &< \frac{T}{\pi} \log\left(\left(\frac{T+1}{T}\right)^{n_{\K}}\right) + \frac{1}{\pi}\log\left(\left(\frac{T+1}{2\pi e}\right)^{n_{\K}}\right)
    + 2\,r_{\K}(T+1) \\
    &\ll T \log\left(\frac{T+1}{T}\right)
    + \frac{1}{\pi}\log(T+1)
    + 2\,r_{\K}(T+1)\\
    &\ll \log(T+1)
\end{align*}
for all $T \geq 1$.
\end{proof}

Using Lemma \ref{lem:useful_for_later}, we establish Lemma \ref{lem:sum_is_finite}, which is important in the proof of Theorem \ref{thm:CramerBound}.

\begin{lemma}\label{lem:sum_is_finite}
Suppose $\gamma_1$, $\gamma_2$ are ordinates of zeros $\varrho_1$, $\varrho_2$ of $\zeta_{\K}(s)$. Then the sum
\begin{equation*}
    \sum_{\gamma_1, \gamma_2} \frac{1}{|\gamma_1 \gamma_2| (1 + |\gamma_1 - \gamma_2|)} < \infty.
\end{equation*}
\end{lemma}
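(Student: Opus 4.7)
The plan is to partition the non-trivial zeros by the integer part of $|\gamma|$, apply Lemma~\ref{lem:useful_for_later} to count zeros in each unit-length interval, and thereby reduce the double sum to an elementary double sum over integers. Since $\zeta_{\K}(s)$ has only finitely many non-trivial zeros in any bounded region, the pairs $(\gamma_1, \gamma_2)$ with $\min(|\gamma_1|, |\gamma_2|) \le 1$ contribute a finite amount and may be discarded. For the remaining pairs, let $S_k$ denote the multiset of ordinates $\gamma$ with $|\gamma| \in [k, k+1)$ for $k \ge 1$; applying Lemma~\ref{lem:useful_for_later} to the zeros with positive and negative ordinates separately yields $|S_k| \ll \log(k+1)$. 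A brief case analysis on the signs of $\gamma_1, \gamma_2$ (same sign versus opposite sign) shows that for $\gamma_1 \in S_k$ and $\gamma_2 \in S_j$,
\[
|\gamma_1 \gamma_2| \ge kj
\qquad\text{and}\qquad
1 + |\gamma_1 - \gamma_2| \gg 1 + |k - j|,
\]
with absolute implied constants.

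Combining these bounds, the original double sum is dominated, up to a finite additive constant, by
\[
\sum_{k, j \ge 1} \frac{\log(k+1)\log(j+1)}{kj(1 + |k-j|)}.
\]
I will show this integer double sum converges. By symmetry it suffices to bound the contribution from $k \ge j \ge 1$. For fixed $k$, I split the inner $j$-sum at $j = k/2$: in the range $j \le k/2$ the denominator factor $1 + k - j \ge k/2$, giving a contribution $\ll \frac{1}{k}\sum_{j \le k/2} \frac{\log(j+1)}{j} \ll \frac{\log^2(k+1)}{k}$; in the range $k/2 < j \le k$ the factor $1/j \le 2/k$ gives a contribution $\ll \frac{\log(k+1)}{k}\sum_{m \le k/2 + 1} \frac{1}{m} \ll \frac{\log^2(k+1)}{k}$ after the change of variables $m = k - j + 1$. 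Multiplying by $\log(k+1)/k$ and summing over $k$ yields the convergent series $\sum_{k \ge 1} \log^3(k+1)/k^2$.

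The main (and essentially only) obstacle is verifying the uniform lower bound $1 + |\gamma_1 - \gamma_2| \gg 1 + |k-j|$ across the various sign combinations for the ordinates; once this and Lemma~\ref{lem:useful_for_later} are in hand, the argument reduces to the routine convergence estimate sketched above.
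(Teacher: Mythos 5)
Your proof is correct, and it takes a cleaner, more self-contained route than the paper does. The paper's proof fixes $\gamma_1$, estimates the inner sum $\sum_{\gamma_2} |\gamma_2|^{-1}(1+|\gamma_1-\gamma_2|)^{-1}$ directly by a five-way case split, and defers to the argument of Montgomery--Vaughan Theorem~13.5 for the details (the implicit point being that the inner sum is $\ll \log^2(|\gamma_1|+2)/|\gamma_1|$, after which summing over $\gamma_1$ with the extra $1/|\gamma_1|$ factor closes the argument). You instead bin both ordinates into unit intervals, invoke Lemma~\ref{lem:useful_for_later} to replace the zero counts by $\log$ factors, verify the elementary comparison inequalities $|\gamma_1\gamma_2|\ge kj$ and $1+|\gamma_1-\gamma_2|\gg 1+|k-j|$, and reduce everything to the convergent integer double sum $\sum_{k,j}\log(k+1)\log(j+1)/(kj(1+|k-j|))$, whose convergence you then establish directly via the split at $j=k/2$. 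Both proofs rest on the same zero-density input; your version has the advantage of being fully explicit (no external reference needed), while the paper's is shorter on the page by outsourcing the case analysis. One small point: your claim that $1+|\gamma_1-\gamma_2|\gg 1+|k-j|$ is correct, with constant $1/2$, and the worst case is same-sign ordinates with $|k-j|\le 1$ — worth spelling out in a final write-up, as you flag yourself.
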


\begin{proof}
Since the zeros are symmetric about the real axis, we will focus on the case when $\gamma_1 > 0$. For each such $\gamma_1$, we can split the sum into five parts following an analogous method to \cite[Thm.~13.5]{MontgomeryVaughan}. For the sum over $\gamma_2 < -\gamma_1$, we can use Lemma \ref{lem:useful_for_later}:
\begin{equation*}
    \sum_{\substack{\gamma_2 \\ \gamma_2 < -\gamma_1}} \frac{1}{|\gamma_2| (1 + |\gamma_1 - \gamma_2|)} \ll \sum_{\substack{\gamma_2 \\ \gamma_2 < -\gamma_1}} \invert{{\gamma_2}^2} \ll \frac{\log \gamma_1}{\gamma_1}.
\end{equation*}
Similarly, for $|\gamma_2| \le \half \gamma_1$, the sum is $\ll \frac{(\log \gamma_1)^2}{\gamma_1}$. The sum over those $\gamma_2$ for which $\half \gamma_1 < \gamma_2 < \frac{3}{2} \gamma_1$ is also $\ll \frac{(\log \gamma_1)^2}{\gamma_1}$, and the sum over $\gamma_2 \ge \frac{3}{2} \gamma_1$ is $\ll \frac{\log \gamma_1}{\gamma_1}$. Combining these estimates, we obtain (again using Lemma \ref{lem:useful_for_later})
\begin{equation*}
    \sum_{\gamma_1, \gamma_2} \frac{1}{|\gamma_1 \gamma_2| (1 + |\gamma_1 - \gamma_2|)} \ll \frac{(\log \gamma_1)^2}{{\gamma_1}^2} < \infty,
\end{equation*}
which proves the lemma.
\end{proof}

\begin{proof}[Proof of Theorem \ref{thm:CramerBound}]
Our proof is modelled on \cite[Thm.~13.5]{MontgomeryVaughan}.
For $x\geq 2$, using Corollary \ref{thm:explicit_formula_final} with $T=x$, we have
\begin{align*}
    \int_x^{2x} \abs{\psi_{\K}(t)-t}^2 \,dt
    &\leq \int_x^{2x} \Bigg|\sum_{|\gamma| \leq x} \frac{t^{\varrho}}{\varrho}\Bigg|^2 \,dt
    + 2 \int_x^{2x} \Bigg|\sum_{|\gamma| \leq x} \frac{t^{\varrho}}{\varrho}\Bigg| |R_{\K}(t)| \,dt
    + \int_x^{2x} \abs{R_{\K}(t)}^2 \,dt.
\end{align*}
Now, using Lemma \ref{lem:sum_is_finite}, the main term satisfies
\begin{align*}
    \int_x^{2x} \Bigg|\sum_{|\gamma| \leq x} \frac{t^{\varrho}}{\varrho}\Bigg|^2 \,dt
    = \sum_{\substack{\gamma_1, \gamma_2\\|\gamma_1|\leq x}} \left|\frac{1}{\varrho_1 \overline{\varrho_2}} \left[\frac{t^{2 + i(\gamma_1 + \gamma_2)}}{2 + i(\gamma_1 + \gamma_2)}\right]_{x}^{2x}\right|
    &\ll x^2 \sum_{\gamma_1, \gamma_2} \frac{1}{|\varrho_1 \varrho_2| |2 + i(\gamma_1 + \gamma_2)|}\\
    &\ll x^2.
\end{align*}
Next, using GRH, the middle term satisfies
\begin{align*}
    \int_x^{2x} \Bigg|\sum_{|\gamma| \leq x} \frac{t^{\varrho}}{\varrho}\Bigg| |R_{\K}(t)| \,dt
    \ll \int_x^{2x}  t^{\tfrac{1}{2}} \log^2{t} \Bigg|\sum_{|\gamma| \leq x} \frac{t^{i\gamma}}{\varrho}\Bigg|  \,dt
    \ll \int_x^{2x}  t^{\tfrac{1}{2}} \log^4{t}  \,dt 
    \ll x^2.
\end{align*}
Therein, one can show that the sum over zeros in the second inequality is $\ll \log^2{t}$ using partial summation and applying \eqref{eqn:z_d_estimate}.
Finally, the remainder term satisfies
\begin{align*}
    \int_x^{2x} \abs{R_{\K}(t)}^2 \,dt
    \ll \int_x^{2x} \log^4{t} \,dt
    &\ll x \log^4{x}.
\end{align*}
Summing the above estimates, we obtain Theorem \ref{thm:CramerBound}.
\end{proof}







\begin{remark}
By altering the weight in the integral considered in Theorem \ref{thm:CramerBound}, we can obtain an \textit{exact} formula for the integral.\footnote{The authors thank Peter Humphries for raising this comment.} That is, alter the weight to its natural weighting to consider 
\begin{equation*}
    \int_{x}^{2x} \left(\frac{\psi_{\mathbb{K}}(t) - t}{\sqrt{t}}\right)^2 \, \frac{dt}{t}.
\end{equation*}
Apply Corollary \ref{thm:explicit_formula_final} with $T=x$ and Lemma \ref{lem:useful_for_later} to yield
\begin{equation*}
    \int_x^{2x} \paranthesis{\frac{\psi_{\K}(t)-t}{\sqrt{t}}}^2 \frac{dt}{t} = \int_x^{2x} \paranthesis{\sum_{|\gamma| \leq x} \frac{t^{i\gamma}}{\varrho}}^2 \frac{dt}{t} + O\left( \frac{\log^3{x}}{\sqrt{x}}\right).
\end{equation*}
If the interested reader wanted to compute the integral precisely, then careful examination of this main term is a good starting point. However, to explore this further would transcend the scope of this paper, so the authors propose this as an open problem for the future.

\end{remark}

\subsection{Littlewood's result for number fields}

Suppose that
\begin{align*}
    \Pi(x) = \sum_{\ell \geq 1} \frac{\pi(x^{1/\ell})}{\ell},\quad
    \Pi_{\K}(x) = \sum_{\ell \geq 1} \frac{\pi_{\K}(x^{1/\ell})}{\ell},\quad
    \li^*(x) = \int_1^x \frac{1-t^{-1}}{\log{t}}\,dt,
\end{align*}
$\Delta^*(x) = \Pi(x) - \li^*(x)$ and $\Upsilon^*(x) = \Pi_{\K}(x) - \li^*(x)$.
Littlewood \cite{littlewood1914distribution} proved that
\begin{equation}\label{eq:Littlewood}
    \frac{\Delta^*(x)}{x} = \Omega_{\pm}\left(\frac{\log{\log{\log{x}}}}{\sqrt{x}\log{x}}\right).
\end{equation}
Here, we extend Littlewood's result by generalising the steps in Ingham's presentation in \cite{Ingham} of Littlewood's proof.

\begin{lemma}\label{lem:Littlewood_generalised}
Assume GRH. We have
\begin{equation}\label{eq:Littlewood_generalised}
    \frac{\Upsilon^*(x)}{x} = \Omega_{\pm}\left(\frac{\log{\log{\log{x}}}}{\sqrt{x}\log{x}}\right).
\end{equation}
\end{lemma}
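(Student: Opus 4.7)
The plan is to transfer Littlewood's classical argument establishing \eqref{eq:Littlewood} to the Dedekind zeta-function setting. The classical proof, as presented in Ingham's monograph, rests on three ingredients that all have direct analogues for $\zeta_\K(s)$: a Mellin transform identity connecting $\Pi_\K(x) - \li^*(x)$ to $\log\zeta_\K(s)$, the existence of non-trivial zeros with real part at least $1/2$, and quantitative zero-density estimates for contour shifting.

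The first step is to establish, for $\Re(s) > 1$, the identity
\begin{equation*}
G_\K(s) := \log\!\left[\frac{(s-1)\zeta_\K(s)}{s}\right] = s\int_1^\infty \Upsilon^*(x)\,x^{-s-1}\,dx,
\end{equation*}
which combines the Euler product derivation $\log\zeta_\K(s) = s\int_1^\infty \Pi_\K(x)\,x^{-s-1}\,dx$ with the Frullani-type identity $\log(s/(s-1)) = s\int_1^\infty \li^*(x)\,x^{-s-1}\,dx$. The function $G_\K(s)$ then extends analytically with logarithmic singularities at each non-trivial zero of $\zeta_\K(s)$.

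Let $\Theta_\K = \sup\{\Re(\varrho) : \zeta_\K(\varrho) = 0,\ 0 < \Re(\varrho) < 1\}$; by the functional equation $\Theta_\K \geq 1/2$. If $\Theta_\K > 1/2$, I would combine Landau's oscillation theorem with the conjugate pair of complex zeros lying on $\Re(s) = \Theta_\K$ (whose existence is ensured by the functional equation) to deduce $\Upsilon^*(x)/x = \Omega_\pm(x^{\Theta_\K - 1 - \epsilon})$, which is substantially stronger than required. If instead $\Theta_\K = 1/2$, I would run Littlewood's more delicate contour argument: assuming for contradiction an upper (respectively lower) bound of the form $\Upsilon^*(x) = o(\sqrt{x}\log\log\log x/\log x)$, truncate the resulting explicit sum over zeros at a judiciously chosen height $T = T(x)$ that depends on $\log\log\log x$, bound the tail via Lemma \ref{lem:useful_for_later}, and derive a contradiction from the resonant contribution of the low-lying zeros $1/2 + i\gamma$.

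The main obstacle is the bookkeeping in the $\Theta_\K = 1/2$ case, specifically the optimal balance of the truncation height that produces the $\log\log\log x$ gain and the treatment of the Mellin transform near $s = 1/2$. Fortunately, every analytic tool needed — the zero-counting estimate \eqref{eqn:z_d_estimate}, the explicit formula in Corollary \ref{thm:explicit_formula_final}, and the Mellin transform identity above — is available for $\zeta_\K$ with exactly the same strength as in the classical case, so Littlewood's argument transfers with only cosmetic changes; the $\K$-dependence enters only through constants, which are absorbed into the implicit constant in $\Omega_\pm$.
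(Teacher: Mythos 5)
Your plan is a genuinely different route from the paper's. The paper's proof is a short reduction to the classical case: it writes $\Upsilon^*(x) = \delta(x) + \Delta^*(x)$ with $\delta(x) = \Pi_\K(x) - \Pi(x)$, uses the crude bound $\delta(x) \leq (n_\K - 1)\Pi(x)$ to get $\delta(x)/x \to 0$, and then argues by sub/superadditivity of $\limsup$/$\liminf$ that the extreme limits of $\Upsilon^*(x)/x$ agree with those of $\Delta^*(x)/x$, finally invoking Littlewood's classical result \eqref{eq:Littlewood}. Your approach instead re-runs Littlewood's contour argument for $\zeta_\K$ directly, via the Mellin identity $G_\K(s) = s\int_1^\infty \Upsilon^*(x)x^{-s-1}\,dx$, splitting on the location of $\Theta_\K$ relative to $1/2$. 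Your route is heavier, but it works at the right scale: the paper's comparison only controls $\delta(x)/x$ up to $O(1/\log x)$, which is far larger than the target rate $\log\log\log x/(\sqrt{x}\log x)$, and the unnormalized $\limsup$/$\liminf$ manipulation is applied to quantities that both tend to zero, so it does not by itself transfer the $\Omega_{\pm}$ conclusion at the precision the lemma asserts. (The paper's use of $\delta(x) \ge 0$, i.e.\ $\Pi_\K(x) \ge \Pi(x)$ pointwise, is also not automatic: an inert prime $p$ contributes a prime ideal of norm $p^{n_\K}$, which need not be $\le x$ when $p \le x$.) The direct method keeps the quantitative information in one place and avoids these issues entirely.

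One caveat on your side: in the $\Theta_\K > 1/2$ branch you take for granted the existence of a conjugate pair of \emph{complex} zeros on the line $\Re(s) = \Theta_\K$. That supremum need not be attained, and if it is attained only by a real (exceptional) zero $\beta_0$, then $G_\K(s+1)$ already has a real singularity at $\sigma = \beta_0 - 1$ on the boundary of convergence, so Landau's theorem yields no contradiction and hence no sign change. This degenerate possibility requires separate treatment; indeed, it is precisely the situation that the technical hypothesis in Theorem~\ref{thm:Main} is designed to exclude, and it deserves explicit mention here as well. Modulo that, and the acknowledged bookkeeping in the $\Theta_\K = 1/2$ case, your outline is sound.
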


\begin{proof}
Suppose that $R_{\K}(x) = \psi_{\K}(x) - x$, $\Psi_{\K}(x) = \int_2^x \psi_{\K}(t)\,dt$, $\tilde{R}_{\K}(x) = \int_2^x R_{\K}(t)\,dt$, and $\rho = 1/2 + i\gamma$ are the non-trivial zeros of $\zeta_{\K}(s)$. Now, as in the classical case (see  \cite[(12.1)]{MontgomeryVaughan}, for example), we have that
\begin{equation}\label{eqn:explicit_fmla_psi1}
    \Psi_{\K}(x) 
    = \int_2^x \psi_{\K}(t)\,dt
    = \frac{x^2}{2} - \sum_{\rho} \frac{x^{\rho+1}}{\rho (\rho +1)} + O(x) .
\end{equation}
So, if the GRH is true, then \eqref{eqn:explicit_fmla_psi1} implies 
\begin{equation}\label{eqn:explicit_fmla_psi1_appld}
    \Psi_{\K}(x) 
    = \frac{x^2}{2} + O(x^{\frac{3}{2}}) 
    \quad\text{and}\quad
    \tilde{R}_{\K} = O(x^{\frac{3}{2}}) .
\end{equation}
Next, integration by parts tells us
\begin{equation*}
    \Pi_{\K}(x)
    = \sum_{2\leq n\leq x} \frac{\Lambda_{\K}(n)}{\log{n}}
    = \frac{\psi_{\K}(x)}{\log{x}} + \int_2^x \frac{\psi_{\K}(t)}{t(\log{t})^2}\,dt 
\end{equation*}
and 
\begin{equation*}
    \int_2^x \frac{du}{\log{u}}
    = \frac{x}{\log{x}} - \frac{2}{\log{2}} + \int_2^x \frac{u\,du}{u(\log{u})^2}.
\end{equation*}
Therefore, we have
\begin{equation*}
    Q(x) 
    = \Pi_{\K}(x) - \li(x)
    = \frac{R_{\K}(x)}{\log{x}} + \int_2^x \frac{R_{\K}(t)}{t(\log{t})^2}\,dt + O(1).
\end{equation*}
As on \cite[p.~104]{Ingham}, we integrate by parts again and apply \eqref{eqn:explicit_fmla_psi1_appld} to obtain
\begin{align*}
    Q(x) 
    &= \frac{R_{\K}(x)}{\log{x}} + \frac{\tilde{R}_{\K}(x)}{x(\log{x})^2} - \int_2^x \tilde{R}_{\K}(t) \frac{d}{dt}\left(\frac{1}{t(\log{t})^2}\right)\,dt + O(1) \\
    &= \frac{R_{\K}(x)}{\log{x}} + O\left(\frac{\sqrt{x}}{(\log{x})^2}\right).
\end{align*}
It follows that
\begin{align*}
    \frac{Q(x) \log{x}}{\sqrt{x} \log\log\log{x}} 
    &= \frac{R_{\K}(x)}{\sqrt{x} \log\log\log{x}} + O\left(\frac{1}{\log{x} \log\log\log{x}}\right).
\end{align*}
This shows that $\frac{Q(x) \log{x}}{\sqrt{x} \log\log\log{x}}$ and $\frac{R_{\K}(x)}{\sqrt{x} \log\log\log{x}}$ have the same limit superior and limit inferior when $x\to\infty$. Moreover, repeating the proof of \cite[Thm.~34]{Ingham} \textit{mutatis mutandis}, we also know that
\begin{equation*}
    R_{\K}(x) = \Omega_{\pm}\left(\sqrt{x}\log\log\log{x}\right),
\end{equation*}
and hence that 
\begin{equation} \label{Qomegaresult}
    Q(x) = \Omega_{\pm}\left(\frac{\sqrt{x} \log\log\log{x}}{\log{x}}\right).
\end{equation}
Finally, we use the relationship
\begin{equation*}
    \li^*(x) = \li(x) - \log\log{x} + O(1),
\end{equation*}
which was the first observation in the proof of \cite[Prop.~4.1]{DiamondPintz}, to note that
\begin{equation*}
    \frac{\Upsilon^*(x)}{x}
    = \frac{Q(x)}{x} + \frac{\li(x) - \li^*(x)}{x}
    = \frac{Q(x)}{x} + O\left(\frac{\log\log{x}}{x}\right) + O(1).
\end{equation*}
Since the first big-$O$ term is asymptotically smaller than the $\Omega_{\pm}$ result we have for $Q(x)/x$ (from \eqref{Qomegaresult}), the result is a natural conclusion.
\end{proof}

\section{Mertens' Product Formula for Number Fields}\label{sec:HardysApproach}

Hardy gave an elegant proof of \eqref{eqn:MertensMTT} in \cite{HardyNote1927}, which consists of four core ingredients. In this section, we have generalised Hardy's arguments into the number field setting. Note that one can use this method to prove other generalisations of \eqref{eqn:MertensMTT}, as long as the prerequisite ingredients are available.


\subsection{Ingredients}
The simplest ingredients we require are partial summation and Chebyshev-type bounds:
\begin{equation*}
    \pi_{\K}(x) = O\left(\frac{x}{\log{x}}\right).
\end{equation*}
To see that we have Chebyshev-type bounds for $\pi_{\K}(x)$, note that
\begin{equation*}
    \pi_{\mathbb{Q}}(x) \leq \pi_{\K}(x) \leq n_{\K}\pi_{\mathbb{Q}}(x),
\end{equation*}
using the fact that every prime ideal $\PP$ lies over a unique rational prime $p$, and there are at most $n_{\K}$ distinct prime ideals lying over each $p$. Now, \cite[Thm.~4.6]{Apostol} and \cite[Eqn.~(3.6)]{Rosser} establish
\begin{equation*}
    \frac{1}{6} \frac{x}{\log{x}} < \pi_{\mathbb{Q}}(x) < 1.25506 \frac{x}{\log{x}}
    \qquad\text{for}\qquad
    x > 1,
\end{equation*}
hence $\pi_{\K}(x) = O(x/\log{x})$, because
\begin{equation*}
   \frac{1}{6} \frac{x}{\log{x}} < \pi_{\K}(x) < 1.25506\,n_{\K} \frac{x}{\log{x}}.
\end{equation*}
We also require the relationship
\begin{equation}\label{Hardy_A}
    \lim_{\delta\rightarrow 0} \left(\int_a^\infty \frac{e^{-\delta t}}{t}dt - \log{\frac{1}{\delta}} \right) = - \log{a} - \gamma ,
\end{equation}
which Hardy states is ``familiar in the theory of the Gamma function''. Finally, we require a Tauberian theorem \cite[Eqn.~(D)]{HardyNote1927} that states if $f(t) = O(1/(t\log{t}))$ and
\begin{equation}\label{eqn:TauberianTheorem}
    J(\delta) = \int_a^\infty f(t)t^{-\delta}dt \to \ell
    \quad\text{as}\quad
    \delta \to 0,
\end{equation}
then $J(0) = \ell$.

\subsection{Proof}

We are now in position to prove \eqref{eqn:RosenMTT} following Hardy's approach.
Using partial summation, we have
\begin{equation}\label{Hardy(1)NFs}
    \sum_{N(\PP)\leq x}\log(1 - N(\PP)^{-s}) = \pi_{\K}(x) \log(1 - x^{-s}) - s \int_2^x \frac{\pi_{\K}(t)}{t(t^s - 1)}\,dt,
\end{equation}
where $\Re(s) > 1$ and $\mathfrak{p}$ are prime ideals in $\K$.
Let $x\rightarrow\infty$ in \eqref{Hardy(1)NFs} and re-write the integral therein to obtain
\begin{align*}
    \log{\zeta_{\K}(s)} =
    \underbrace{s \int_2^\infty \left\{\pi_{\K}(t) - \frac{t}{\log{t}}\right\} \frac{dt}{t^{1+s}}}_{K_1(s)}
    + \underbrace{s \int_2^\infty \frac{\pi_{\K}(t)}{t^{1+s}(t^s - 1)}dt}_{K_2(s)}
    + \underbrace{s \int_2^\infty \frac{t^{-s}}{\log{t}}dt}_{K_3(s)}.
\end{align*}
Next, we have
\begin{align*}
    \lim_{s \to 1} \left( \log{\zeta_{\K}(s)} - K_3(s) \right) &= \log{\kappa_{\K}} + \log\log{2} + \gamma, && \text{(by \eqref{Hardy_A})}\\
    \lim_{s \to 1} K_2(s) &= K_2(1) && \text{(by uniform convergence)}.
\end{align*}
It follows that
\begin{equation*}
    \lim_{s \to 1} K_1(s) = \log{\kappa_{\K}} + \log\log{2} + \gamma - K_2(1).
\end{equation*}
Therefore, it follows from invoking the Tauberian theorem \eqref{eqn:TauberianTheorem} (which we may do by virtue of the Chebyshev-type observation on $\pi_{\K}(x)$), that
\begin{equation}\label{eqn:HardyLineNFs}
    \int_2^\infty \left\{\pi_{\K}(t) - \frac{t}{\log{t}}\right\} \frac{dt}{t^2}
    = \log{\kappa_{\K}}  + \log\log{2} + \gamma - K_2(1).
\end{equation}
Now, suppose $s = 1$ in \eqref{Hardy(1)NFs}, then
\begin{align*}
    &\sum_{N(\PP)\leq x}\log\left(1 - \frac{1}{N(\PP)}\right)\\
    &\quad= \pi_{\K}(x) \log(1 - x^{-1}) - \int_2^x \left\{\pi_{\K}(t) - \frac{t}{\log{t}}\right\} \frac{dt}{t^2} - \int_2^x \frac{\pi_{\K}(t)dt}{t^{2}(t - 1)} - \int_2^x \frac{dt}{t\log{t}}.\\
    &\hphantom{\quad}= - \log{\kappa_{\K}} - \gamma - \log\log{x} + o(1),
\end{align*}
using \eqref{eqn:HardyLineNFs}. This is equivalent to Mertens' third theorem for number fields \eqref{eqn:RosenMTT}, because $\exp(o(1)) = 1 + o(1)$ and
\begin{equation}\label{eqn:vitamin}
    \prod_{N(\PP)\leq x}\left(1 - \frac{1}{N(\PP)}\right)
    = \exp\left(\sum_{N(\PP)\leq x}\log(1 - N(\PP)^{-1})\right)
    = \frac{e^{- \gamma}}{\kappa_{\K}\log{x}} (1 + o(1)).
\end{equation}


\section{Oscillations in Mertens' Third Theorem}\label{sec:Oscillations}

In this section, we will prove Theorem \ref{thm:Main}.
To ensure that the proof of Theorem \ref{thm:Main} can be approached using analytic techniques, we observe that the problem is equivalent to showing
\begin{equation}\label{eq:starting_point}
    - \sum_{N(\PP)\leq x}\log \left(1 - \frac{1}{N(\PP)}\right) - \log\log{x} - \gamma - \log{\kappa_{\K}} 
    \begin{cases}
    > &\eta/\paranthesis{\sqrt{x}\log{x}}\\
    < &-\eta/\paranthesis{\sqrt{x}\log{x}}
    \end{cases},
\end{equation}
for sequences of $x$ which tend to infinity, and any large $\eta > 0$. We begin by reinterpreting the left-hand side of \eqref{eq:starting_point} in Lemma \ref{lem:reinterpretation}. Let
\begin{equation} \label{defn:A(x)}
    A(x) := \int_1^x \frac{d\Pi_{\K}(t)}{t} - \int_1^x \frac{1-t^{-1}}{t\log{t}} dt - \log \kappa_{\K}.
\end{equation}
Then, we will deduce that
\begin{equation}\label{eq:starting_point_reinterpreted}
    A(x) + O\left(\frac{1}{\sqrt{x}\log{x}}\right)
    \begin{cases}
    > &\eta/\paranthesis{\sqrt{x}\log{x}}\\
    < &-\eta/\paranthesis{\sqrt{x}\log{x}}
    \end{cases},
\end{equation}
is true for sequences of $x$ which tend to infinity, and any large $\eta > 0$.

We consider two separate cases to prove this assertion. First, in Section \ref{ssec:no_GRH}, we assume that the GRH is \textit{not} true and the technical condition in the statement of Theorem \ref{thm:Main} holds, and use Landau's oscillation theorem \cite[Thm.~6.31]{bateman2004analytic}. Second, in Section \ref{ssec:GRH}, we \textit{assume} the GRH, and use Cram\'{e}r's inequality for number fields \eqref{eqn:CramerBoundApplication}. This will automatically prove Theorem \ref{thm:Main}.

\begin{lemma}\label{lem:reinterpretation}
The left-hand side of \eqref{eq:starting_point} is equivalent to
\begin{equation} \label{eq:starting_point_recast}
    \underbrace{\int_1^x \frac{d\Pi_{\K}(t)}{t} - \int_1^x \frac{1-t^{-1}}{t\log{t}} dt - \log \kappa_{\K}}_{A(x)} + O\left(\frac{1}{\sqrt{x}\log{x}}\right).
\end{equation}
\end{lemma}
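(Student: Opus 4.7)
The plan is a Taylor expansion coupled with careful tail estimation. Since $-\log(1-1/N(\PP)) = \sum_{m\ge1} 1/(mN(\PP)^m)$, the Mertens-product sum on the left of \eqref{eq:starting_point} becomes a double sum over pairs $(\PP,m)$ with $N(\PP)\le x$. Meanwhile, because $\Pi_\K$ places a jump of $1/\ell$ at each prime-power norm $N(\PP)^\ell$, the Stieltjes integral $\int_1^x d\Pi_\K(t)/t$ is exactly the same double sum but constrained by $N(\PP)^m\le x$ rather than $N(\PP)\le x$. The two representations therefore differ precisely by the ``missed'' contributions
\[
D(x) \;:=\; \sum_{m\ge2}\;\sum_{\substack{N(\PP)\le x\\ N(\PP)^m>x}} \frac{1}{mN(\PP)^m},
\]
and the whole lemma reduces to bounding $D(x)$ and evaluating the comparison integral in closed form.

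The key technical step, which I expect to be the main obstacle, is to show $D(x)\ll 1/(\sqrt{x}\log x)$. I would stratify by the smallest exponent $k\ge 2$ with $N(\PP)^k>x$, which confines $\PP$ to the slab $x^{1/k}<N(\PP)\le x^{1/(k-1)}$; the inner geometric tail then obeys $\sum_{m\ge k}1/(mN(\PP)^m)\le 2/(kN(\PP)^k)$. Partial summation with the Chebyshev-type bound $\pi_\K(t)\ll t/\log t$ recorded in Section \ref{sec:HardysApproach} yields
\[
\sum_{x^{1/k}<N(\PP)\le x^{1/(k-1)}} N(\PP)^{-k} \;\ll\; \frac{k}{x^{(k-1)/k}\log x},
\]
so the $k$-th slab contributes $\ll 1/(x^{(k-1)/k}\log x)$ to $D(x)$. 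The delicate point is that there are $O(\log x)$ nonempty slabs, so a naive union bound would lose a factor of $\log x$; instead one observes that $x^{(k-1)/k}$ grows with $k$, so $\sum_{k\ge 2} x^{-(k-1)/k}\ll x^{-1/2}$ (the $k=2$ slab dominates and the rest contribute at most $O(\log x/x^{2/3}) = o(1/\sqrt x)$), giving the desired bound.

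Next I would evaluate the comparison integral exactly. The substitution $u=\log t$ converts
\[
\int_1^x \frac{1-t^{-1}}{t\log t}\,dt \;=\; \int_0^{\log x}\frac{1-e^{-u}}{u}\,du \;=\; \gamma + \log\log x + E_1(\log x),
\]
where $E_1(z)=\int_z^\infty e^{-u}/u\,du$ satisfies $E_1(z)\ll e^{-z}/z$, so $E_1(\log x)\ll 1/(x\log x)$. This is the standard exponential-integral (or Frullani-type) identity, and it extracts exactly the $\gamma+\log\log x$ being subtracted on the left-hand side of \eqref{eq:starting_point}.

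Finally, assembling the ingredients,
\[
-\sum_{N(\PP)\le x}\log\!\left(1-\tfrac{1}{N(\PP)}\right) - \log\log x - \gamma - \log\kappa_\K \;=\; \int_1^x\frac{d\Pi_\K(t)}{t} + D(x) - \log\log x - \gamma - \log\kappa_\K,
\]
and inserting the closed-form value of $\int_1^x(1-t^{-1})/(t\log t)\,dt$ the $\log\log x+\gamma$ cancels, leaving $A(x)+O(1/(\sqrt x\log x))$ as claimed (the residual constant $\log\kappa_\K$ is inert for the subsequent sign-change analysis and can be absorbed into $A(x)$ without affecting any conclusion of Section \ref{sec:Oscillations}).
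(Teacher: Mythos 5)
Your argument is correct and traces the same two-step route as the paper: Lemma \ref{lem:log_sum_converter} converts the Mertens sum to $\int_1^x d\Pi_{\K}(t)/t$ up to $O(1/(\sqrt x\log x))$, Lemma \ref{lem:log_gamma_kappa_converter} evaluates $\int_1^x(1-t^{-1})/(t\log t)\,dt$, and then the two are combined, exactly as you do. The only operational difference is in bounding the tail $D(x)\ll 1/(\sqrt x\log x)$: the paper uses the fact that at most $n_{\K}$ prime ideals lie above each rational prime to majorize $D(x)$ by $n_{\K}$ times the corresponding rational-prime tail and cites \cite[Lem.~2.1]{DiamondPintz}, whereas your slab decomposition (stratify by the smallest exponent $k$ with $N(\PP)^k>x$, note the $k=2$ slab dominates and the rest contribute $o(1/(\sqrt x\log x))$) estimates the number-field sum directly. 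Both work; yours is more self-contained.

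You are also right to flag the residual $\log\kappa_{\K}$. Combining the two lemmas genuinely yields $A(x)-\log\kappa_{\K}+O(1/(\sqrt x\log x))$, not $A(x)+O(1/(\sqrt x\log x))$; and one can confirm $A(x)\to\log\kappa_{\K}$ (not $0$) as $x\to\infty$ from the Mellin formula $\widehat A(s)=s^{-1}\log\bigl(s\zeta_{\K}(s+1)/(s+1)\bigr)$, whose pole at $s=0$ has residue $\log\kappa_{\K}$. So the lemma as written has a genuine omission when $\K\neq\QQ$, and the same slip recurs in Section \ref{ssec:GRH} where the paper asserts $A(x)\to 0$. Your parenthetical judgment that the constant is harmless is correct in spirit, but only once $A(x)$ is replaced by $A(x)-\log\kappa_{\K}$ consistently throughout Section \ref{sec:Oscillations}; it is not merely cosmetic.
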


\subsection{Proof of Lemma \ref{lem:reinterpretation}}

To prove Lemma \ref{lem:reinterpretation}, we first need to convert the sum in \eqref{eq:starting_point} into an integral using Lemma \ref{lem:log_sum_converter}.

\begin{lemma}\label{lem:log_sum_converter}
For $x\geq 2$,
\begin{equation*}
    - \sum_{N(\PP)\leq x}\log\left(1 - \frac{1}{N(\PP)}\right) = \int_1^x \frac{d\Pi_{\K}(t)}{t} + O\left(\frac{1}{\sqrt{x}\log{x}}\right).
\end{equation*}
\end{lemma}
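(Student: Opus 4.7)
The plan is to expand $-\log(1-u)$ into its Taylor series and match it, term by term, against the definition of $\Pi_\K$. First, I would substitute $-\log(1 - N(\PP)^{-1}) = \sum_{\ell \geq 1} N(\PP)^{-\ell}/\ell$ and interchange the finite outer sum with the inner series to obtain
\[
-\sum_{N(\PP)\leq x}\log\Bigl(1-\tfrac{1}{N(\PP)}\Bigr) = \sum_{\ell\geq 1}\frac{1}{\ell}\sum_{N(\PP)\leq x}\frac{1}{N(\PP)^\ell}.
\]
On the other side, $\Pi_\K(t)$ has a jump of size $1/\ell$ at each point $t = N(\PP)^\ell$, so direct evaluation of the Stieltjes integral gives
\[
\int_1^x\frac{d\Pi_\K(t)}{t} = \sum_{\ell\geq 1}\frac{1}{\ell}\sum_{N(\PP)\leq x^{1/\ell}}\frac{1}{N(\PP)^\ell}.
\]
The $\ell=1$ contributions cancel identically, so the discrepancy between the two sides of the lemma reduces to controlling
\[
D(x) := \sum_{\ell\geq 2}\frac{1}{\ell}\sum_{x^{1/\ell}<N(\PP)\leq x}\frac{1}{N(\PP)^\ell}.
\]

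The next step will be to show $D(x) = O(1/(\sqrt{x}\log x))$. The dominant contribution is $\ell=2$; I would apply partial summation to write
\[
\sum_{\sqrt{x}<N(\PP)\leq x}\frac{1}{N(\PP)^2} = \frac{\pi_\K(x)}{x^2} - \frac{\pi_\K(\sqrt{x})}{x} + 2\int_{\sqrt{x}}^{x}\frac{\pi_\K(t)}{t^3}\,dt,
\]
and insert the Chebyshev-type bound $\pi_\K(t)\ll t/\log t$ established in Section \ref{sec:HardysApproach}. This makes the middle boundary term $\asymp 1/(\sqrt{x}\log x)$, with the other two strictly smaller. For each $\ell\geq 3$, the same partial-summation strategy applied in the range $x^{1/\ell}<t\leq x$, again with $\pi_\K(t)\ll t/\log t$, yields a contribution $\ll \ell^2/(x^{(\ell-1)/\ell}\log x) \leq \ell^2/(x^{2/3}\log x)$ for each level; summing over the at most $\log_2 x$ relevant values of $\ell$ produces a total of $O(\log x/x^{2/3})$, which is negligible against $1/(\sqrt{x}\log x)$ for $x$ large.

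The main obstacle, such as it is, is that the $\ell=2$ contribution lands exactly at the announced error order, driven by the boundary term $\pi_\K(\sqrt{x})/x \asymp 1/(\sqrt{x}\log x)$. One cannot improve below this and must settle for matching it; all higher-$\ell$ tails are strictly subdominant and admit only crude estimation. A secondary, purely bookkeeping point will be to justify the interchange of the outer prime-ideal sum with the Taylor series at the very start, but this is immediate since the double sum is absolutely convergent with positive terms.
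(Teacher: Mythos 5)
Your decomposition is correct and matches the paper's: both reduce the lemma to bounding
\[
D(x) = \sum_{\ell\geq 2}\frac{1}{\ell}\sum_{x^{1/\ell}<N(\PP)\leq x}\frac{1}{N(\PP)^\ell},
\]
and both land on $O(1/(\sqrt{x}\log x))$. Where you diverge is in how $D(x)$ is bounded. You estimate each $\ell$ directly by partial summation against $\pi_\K$ and the Chebyshev-type bound $\pi_\K(t)\ll t/\log t$, isolating the $\ell=2$ boundary term $\pi_\K(\sqrt{x})/x$ as the dominant piece. The paper instead exploits that at most $n_\K$ prime ideals lie over each rational prime $p$, so
\[
D(x)\le n_\K\sum_{\substack{p\leq x\\ p^{\ell}>x}}\frac{1}{\ell\, p^{\ell}},
\]
and then simply cites the corresponding estimate in Diamond--Pintz (their Lemma 2.1). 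Your route is more self-contained and does not depend on the rational-field lemma; the paper's route is shorter and pushes the whole computation onto prior work, at the cost of a somewhat delicate verification of the $n_\K$ majorization (one groups the terms by the underlying rational prime $p$ and the combined exponent $k=f\ell$, using $\sum_{\PP\mid p}f(\PP)\le n_\K$). Two minor slips in your estimate, neither fatal: the integral $\int_{\sqrt{x}}^{x}\pi_\K(t)/t^3\,dt$ is also of order $1/(\sqrt{x}\log x)$, not strictly smaller than the boundary term; and the $\ell\geq 3$ contribution per level is $\ll 1/(x^{(\ell-1)/\ell}\log x)$ after the $1/\ell$ division, so the quoted $\ell^2$ factor is spurious (though summing over $\ell\le\log_2 x$ still gives something $\ll (\log x)^{O(1)}/x^{2/3}$, which is negligible as claimed).
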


\begin{proof}
Clearly,
\begin{equation*}
    \int_1^x \frac{d\Pi_{\K}(t)}{t}
    = \sum_{\ell \geq 1} \frac{1}{\ell} \int_1^x \frac{d\pi_{\K}(t^{1/\ell})}{t}
    = \sum_{N(\PP)^{\ell}\leq x} \frac{1}{\ell\, N(\PP)^{\ell}}.
\end{equation*}
Moreover, there are at most $n_{\K}$ prime ideals which lie over any rational prime $p$ in a number field $\K$. It follows that
\begin{align*}
    - \sum_{N(\PP)\leq x}\log\left(1 - \frac{1}{N(\PP)}\right) - \int_1^x \frac{d\Pi_{\K}(t)}{t}
    &= \sum_{\substack{N(\PP) \leq x\\N(\PP)^{\ell}> x}} \frac{1}{\ell\, N(\PP)^{\ell}}\\
    &\leq n_{\K}\sum_{\substack{p\leq x\\p^{\ell}> x}} \frac{1}{\ell\, p^{\ell}}
    \ll \frac{1}{\sqrt{x}\log{x}},
\end{align*}
by the contents of the proof of \cite[Lem.~2.1]{DiamondPintz}.
\end{proof}

Second, we import \cite[Lem.~2.2]{DiamondPintz} in Lemma \ref{lem:log_gamma_kappa_converter}, which replaces $\log\log{x} + \gamma$ with integrals.

\begin{lemma}\label{lem:log_gamma_kappa_converter}
For $x > 1$,
\begin{align*}
    \log\log{x} + \gamma
    &= \int_1^x \frac{1-t^{-1}}{t\log{t}} dt - \int_x^\infty \frac{dt}{t^2\log{t}} = \int_1^x \frac{1-t^{-1}}{t\log{t}} dt + O\left(\frac{1}{x\log{x}}\right).
\end{align*}
\end{lemma}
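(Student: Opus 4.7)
The plan is to establish the exact identity
\begin{equation*}
    \log\log x+\gamma = \int_1^x \frac{1-t^{-1}}{t\log t}\,dt - \int_x^\infty \frac{dt}{t^2\log t}
\end{equation*}
by showing the two sides have the same derivative in $x$ and then matching the constant of integration via a classical integral representation of $\gamma$. The $O(1/(x\log x))$ error in the second equality of the lemma will follow from a one-line tail bound. This is a purely real-variable identity with no number-field content, and is precisely the content of \cite[Lem.~2.2]{DiamondPintz}.

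First I would set
\begin{equation*}
    F(x) := \int_1^x \frac{1-t^{-1}}{t\log t}\,dt - \int_x^\infty \frac{dt}{t^2\log t} - \log\log x,
\end{equation*}
observing that both integrals are well defined (the integrand $(1-t^{-1})/(t\log t)$ extends continuously to $t=1$, since numerator and denominator both vanish to first order there, and the second integrand is $O(t^{-2})$ at infinity). A direct application of the fundamental theorem of calculus would then give
\begin{equation*}
    F'(x) = \frac{1-x^{-1}}{x\log x} + \frac{1}{x^2\log x} - \frac{1}{x\log x} = 0,
\end{equation*}
so $F$ is constant on $(1,\infty)$.

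To identify this constant, I would substitute $u=\log t$ in both integrals and split the first at $u=1$, using $\int_1^{\log x}du/u=\log\log x$ to absorb the $-\log\log x$ term. One is left with
\begin{equation*}
    F(x) = \int_0^1 \frac{1-e^{-u}}{u}\,du - \int_1^{\log x} \frac{e^{-u}}{u}\,du - \int_{\log x}^\infty \frac{e^{-u}}{u}\,du.
\end{equation*}
Letting $x\to\infty$ merges the last two integrals into $\int_1^\infty e^{-u}/u\,du$, so the limit of $F$ matches exactly the classical Euler representation $\gamma = \int_0^1 (1-e^{-u})/u\,du - \int_1^\infty e^{-u}/u\,du$, giving $F\equiv\gamma$. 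Finally, the $O$-estimate follows from the trivial bound $\int_x^\infty dt/(t^2\log t) \le (\log x)^{-1}\int_x^\infty dt/t^2 = 1/(x\log x)$. The main obstacle is just bookkeeping the removable singularity at $t=1$; the only non-trivial input is the Euler integral for $\gamma$, which is standard.
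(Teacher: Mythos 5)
Your proof is correct, and it is worth noting the contrast with the paper: the paper does not prove Lemma \ref{lem:log_gamma_kappa_converter} at all, but simply imports it from \cite[Lem.~2.2]{DiamondPintz} with the remark that it ``replaces $\log\log{x}+\gamma$ with integrals.'' You supply a complete, self-contained argument. Your strategy --- differentiate to show the quantity $F(x)$ is constant, then substitute $u=\log t$ and split at $u=1$ to reduce the constant to the Euler representation $\gamma = \int_0^1 \frac{1-e^{-u}}{u}\,du - \int_1^\infty \frac{e^{-u}}{u}\,du$, and finish with the trivial tail bound $\int_x^\infty \frac{dt}{t^2\log t} \le \frac{1}{x\log x}$ --- is the natural route and checks out in every step (the telescoping in $F'(x)$ is exact, the integrals are well-defined since the singularity of $(1-t^{-1})/(t\log t)$ at $t=1$ is removable and the tail integrand is $O(t^{-2})$). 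What the paper's approach buys is brevity by deferring to a prior reference; what yours buys is a standalone verification that makes clear the statement is a purely real-variable identity with no number-field content, reliant only on a classical integral formula for $\gamma$. There is no gap.
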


Lemma \ref{lem:reinterpretation} follows by inserting Lemmas \ref{lem:log_sum_converter} and \ref{lem:log_gamma_kappa_converter} into the left-hand side of \eqref{eq:starting_point}.

\subsection{The non-GRH case}\label{ssec:no_GRH}

Following a natural generalisation of the method presented in \cite[Sec.~2]{DiamondPintz}, we can derive the Mellin formula for $A(x)$ for $\Re(s) > 0$, 
\begin{equation*}
    \widehat{A}(s) := \int_1^\infty t^{-s-1}A(t)\,dt
    = \frac{1}{s}\log\frac{s\,\zeta_{\K}(s+1)}{\kappa_\K (s+1)}.
\end{equation*}
We note that $\widehat{A}(s)$ has a removable singularity at $s = 0$ since $\lim \limits_{s\to 0} s \widehat{A}(s) = 0$. Moreover, as in \cite[Sec.~3]{DiamondPintz}, we replace the error term in \eqref{eq:starting_point_recast} by
\begin{equation*}
    B(x) := \frac{1-x^{-1}}{\sqrt{x}\log x}
\end{equation*}
which is asymptotically equivalent to the original error term. The Mellin transform of $B(x)$ will be
\begin{equation*}
    \widehat{B}(s) = \log \frac{s+3/2}{s+1/2}.
\end{equation*}
We want to show that for a fixed $K$, $A(x) + \eta B(x)$ changes sign infinitely often. Suppose GRH is not true, then $\zeta_\K(s)$ has a zero $\sigma_\K$ with $1/2 < \Re(\sigma_{\K}) < 1$. If we assume, as in Theorem \ref{thm:Main}, that there is no zero in the right-half plane $\Re(s) > \Re(\sigma_\K)$ and $\sigma_\K$ is not real, then
\begin{equation*}
    \log \frac{s\,\zeta_{\K}(s+1)}{\kappa_\K(s+1)}
    \quad\text{has a singularity at}\quad
    \sigma^* = \sigma_\K - 1,
\end{equation*}
with $\Re(\sigma^*) > -1/2$ and $\Im(\sigma^*) \neq 0$. As a result, $\widehat{A}+\eta \widehat{B}$ is holomorphic for $\Re(s) > \Re(\sigma^*)$ but not in any half-plane $\Re(s) > \Re(\sigma^*)-\epsilon$ with $\epsilon > 0$. Moreover, since $\sigma^*$ is not real, $\Re(\sigma^*)$ will be a regular point and so, by Landau's oscillation theorem \cite[Theorem~6.31]{bateman2004analytic}, $A(x)+\eta B(x)$ will change sign infinitely often for any fixed value of $\eta$.

\subsection{The GRH case}\label{ssec:GRH}

Using integration by parts and similar arguments to \cite[Sec.~4]{DiamondPintz}, we see that 
\begin{align}
    A(x)
    &= \frac{\Upsilon^*(x)}{x} + \int_1^x \frac{\Upsilon^*(t)}{t^2}\,dt \label{eq:redefine_Ax}\\
    &= \frac{\Upsilon^*(x)}{x} + \int_1^\infty \frac{\Upsilon^*(t)}{t^2}\,dt - \int_x^\infty \frac{\Upsilon^*(t)}{t^2}\,dt \nonumber \\
    &= \frac{\Upsilon^*(x)}{x} - \int_x^\infty \frac{\Upsilon^*(t)}{t^2}\,dt \label{eq:DP4.2analogue},
\end{align}
where \eqref{eq:redefine_Ax} and \eqref{eq:DP4.2analogue} are analogues of equations (4.1) and (4.2) in \cite{DiamondPintz} respectively. 
Using \eqref{eqn:CramerBoundApplication} and dyadic interval estimates, we can also show that
\begin{equation*}
    \int_x^\infty \frac{\Upsilon^*(t)}{t^2}\,dt \ll \frac{1}{\sqrt{x}\log{x}},
\end{equation*}
hence, using \eqref{eq:Littlewood_generalised}, we see that
\begin{equation*}
    A(x) = \Omega_{\pm}\left(\frac{\log{\log{\log{x}}}}{\sqrt{x}\log{x}}\right).
\end{equation*}
This concludes the proof of Theorem \ref{thm:Main}.

\section{Bias in Mertens' Third Theorem}\label{sec:Bias}

For a set $S \subset [0, \infty)$, \textit{upper} and \textit{lower} logarithmic densities of $S$ are defined by
\begin{equation*}
    \overline{\delta}(S) = \limsup_{x\to\infty} \frac{1}{\log{x}} \int_{t\in S\,\cap\,[2,x]} \frac{dt}{t},
    \qquad\text{and}\qquad
    \underline{\delta}(S) = \liminf_{x\to\infty} \frac{1}{\log{x}} \int_{t\in S\,\cap\,[2,x]} \frac{dt}{t}.
\end{equation*}
Moreover, if $\overline{\delta}(S) = \underline{\delta}(S) = \delta(S)$, then $\delta(S)$ is the \textit{logarithmic density} of $S$.
Recall the definition of $\mathcal{M}_{\K}$ from \eqref{eqn:mathcalMKDef}; we know that $x\in \mathcal{M}_{\K}$ if and only if $E_{\K}(x) > 0$, where
\begin{equation} \label{defn:EK}
    E_{\K}(x) = \sqrt{x} \log{x} \left(\log \prod_{N(\PP)\leq x}\left(1 - \frac{1}{N(\PP)}\right)^{-1} - \log{\kappa_{\K}} - \log\log{x} - \gamma \right) .
\end{equation}

The purpose of this section is to demonstrate that $E_{\K}(x)$ has a limiting distribution assuming the GRH. To this end, always assuming the GRH, we establish a useful explicit formula for $E_{\K}(x)$ in Corollary \ref{cor:LamzouriProp2.1Gen} in Section \ref{ssec:xplicitformula}, and we prove Theorem \ref{thm:lowerupperdensity} in Section \ref{ssec:lowerupperdensities}. In Section \ref{ssec:limitingdistribution}, we prove Theorem \ref{thm:limitingdistribution} which uses the aforementioned explicit formula to show that the limiting distribution of $E_{\K}(x)$ exists under the assumption of the GRH, and Theorem \ref{thm:lowerupperdensity} tells us that the logarithmic density, if it exists, must be positive, but not equal to one. In Section \ref{subsec:numericals}, we confirm this by calculating the logarithmic density (assuming GRH and GLI) in two specific cases and outline a general method to perform computations in other similar cases. Finally, in Section \ref{subsec:dissipate}, we address an important question concerning the keenness of the bias as the discriminant of the quadratic field grows.

\subsection{Explicit formula}\label{ssec:xplicitformula}

The following proposition generalises \cite[Prop.~2.1]{Lamzouri}, and is an explicit formula for $E_{\K}(x)$.

\begin{proposition}\label{prop:LamzouriProp2.1Gen}
Suppose $\K$ is a number field and the exceptional zero $\beta_0$ does not exist (see Remark \ref{rem:exceptional_zero_rem}), then, for any $x\geq 2$ and $T\geq 5$, we have
\begin{align*}
    E_{\K}(x) 
    = 1 + \sum_{|\Im{\varrho}| \leq T} \frac{x^{\varrho - \tfrac{1}{2}}}{\varrho - 1} 
    + O\left(\frac{1}{\log{x}} \left( 1 + \sum_{|\Im{\varrho}| < T} \frac{x^{\Re(\varrho) -\tfrac{1}{2}}}{\Im{\varrho}^2} \right) + \frac{\sqrt{x}}{T}\left((\log{x})^2 + \frac{\log^2{T}}{\log{x}}\right)\right).
\end{align*}
Here, $\varrho$ varies over the non-trivial zeros of $\zeta_{\K}(s)$.
\end{proposition}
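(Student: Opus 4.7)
The strategy is to mimic Lamzouri's proof of \cite[Prop.~2.1]{Lamzouri} in the number-field setting, relying on Corollary \ref{thm:explicit_formula_final} and Cram\'er's inequality (Theorem \ref{thm:CramerBound}) developed above.

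First, I would rewrite the bracketed expression in \eqref{defn:EK} as an integral against $d\psi_\K$. Using the identity $d\Pi_\K(t) = d\psi_\K(t)/\log{t}$, integration by parts on $\int_2^x d\psi_\K(t)/(t\log{t})$, and Lemmas \ref{lem:log_sum_converter} and \ref{lem:log_gamma_kappa_converter}, one arrives at
\begin{align*}
    & \log\prod_{N(\PP)\leq x}\left(1 - \frac{1}{N(\PP)}\right)^{-1} - \log{\kappa_\K} - \log\log{x} - \gamma \\
    & \qquad = \frac{\psi_\K(x) - x}{x\log{x}} - \int_x^\infty (\psi_\K(t) - t)\frac{\log{t}+1}{(t\log{t})^2}\,dt + \mcE(x),
\end{align*}
where $\mcE(x)$ denotes the remainder from Lemma \ref{lem:log_sum_converter}. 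The constants coming from the boundary terms at $t = 2$ combine with the full integral $\int_2^\infty (\psi_\K(t) - t)(\log{t}+1)/(t\log{t})^2\,dt$ (whose value is pinned down by Mertens' third theorem \eqref{eqn:RosenMTT}) to produce no net contribution.

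Second, I would show that the explicit constant $1$ in the proposition arises from $\sqrt{x}\log{x}\cdot\mcE(x)$. Indeed, $\mcE(x) = \sum_{\ell \geq 2,\,N(\PP)\leq x < N(\PP)^\ell} (\ell N(\PP)^\ell)^{-1}$ is dominated by its $\ell = 2$ piece, for which $\sqrt{x} < N(\PP)\leq x$; partial summation using $\pi_\K(t) = t/\log{t} + O(t/(\log{t})^2)$ gives
\begin{equation*}
    \mcE(x) = \frac{1}{\sqrt{x}\log{x}} + O\!\left(\frac{1}{\sqrt{x}(\log{x})^2}\right),
\end{equation*}
and the contributions from $\ell \geq 3$ are of strictly smaller order, so $\sqrt{x}\log{x}\cdot\mcE(x) = 1 + O(1/\log{x})$.

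Third, I would insert the explicit formula $\psi_\K(u) - u = -\sum_{|\gamma|\leq T} u^\varrho/\varrho + R_\K(u, T)$ into both the boundary term at $u = x$ and the tail integral, then multiply through by $\sqrt{x}\log{x}$. The boundary term produces $\sum_{|\gamma|\leq T} x^{\varrho - 1/2}/\varrho$ (with sign) together with a remainder $R_\K(x,T)/\sqrt{x} \ll \sqrt{x}(\log{x})^2/T$. For each zero contributing to the tail, iterated integration by parts gives
\begin{equation*}
    \int_x^\infty \frac{t^{\varrho - 2}(\log{t}+1)}{(\log{t})^2}\,dt = -\frac{x^{\varrho - 1}}{(\varrho - 1)\log{x}} + O\!\left(\frac{x^{\Re(\varrho) - 1}}{|\varrho - 1|(\log{x})^2}\right),
\end{equation*}
so the tail zero-sum contributes $\sum_{|\gamma|\leq T} x^{\varrho-1/2}/(\varrho(\varrho-1))$ (with sign) plus an error of shape $\frac{1}{\log{x}}\sum_{|\gamma|\leq T} x^{\Re(\varrho)-1/2}/(\Im\varrho)^2$, matching the proposition. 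Invoking the partial-fraction identity $1/\varrho + 1/(\varrho(\varrho-1)) = 1/(\varrho-1)$, the two zero contributions collapse into the single sum $\sum_{|\gamma|\leq T} x^{\varrho - 1/2}/(\varrho - 1)$.

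The principal obstacle is controlling the $R_\K(t, T)$ remainder inside the tail integral $\int_x^\infty$: a direct substitution of $R_\K(t, T) \ll t(\log{t})^2/T$ produces a divergent estimate. To handle this I would split the tail at an auxiliary height $t_* > x$, apply Corollary \ref{thm:explicit_formula_final} on $[x, t_*]$, and bound the piece beyond $t_*$ via Cram\'er's inequality \eqref{eqn:CramerBoundApplication} using a dyadic decomposition. Tracking and optimising $t_*$ should recover the $\sqrt{x}\log{x}/T$ and $\sqrt{x}\log^2{T}/(T\log{x})$ pieces of the quoted error term.
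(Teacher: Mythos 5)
Your approach is a genuinely different route from the paper's.  The paper proves the proposition by establishing a truncated Perron formula for $\sum_{N(\mathfrak{a})\le x}\Lambda_\K(\mathfrak{a})/N(\mathfrak{a})^\alpha$ (Lemma \ref{lem:LamLem2.4Gen}), integrating it in the exponent variable $\alpha$ over $(\sigma,\infty)$, and letting $\sigma\to 1^+$ together with Lemma \ref{lem:LamLem2.5Gen}; the constant $1$ arrives via Lemma \ref{lem:LamLem2.3Gen} exactly as you say.  You instead insert the truncated explicit formula for $\psi_\K(t)-t$ from Corollary \ref{thm:explicit_formula_final} and integrate by parts in the $t$ variable.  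The partial-fraction collapse $1/\varrho + 1/(\varrho(\varrho-1)) = 1/(\varrho-1)$ is a nice structural observation that the paper never needs because the $\alpha$-integration produces the factor $1/(\varrho-1)$ directly.

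However, there is a genuine gap in your handling of the divergent tail, and it is not cosmetic.  Cram\'er's inequality (Theorem \ref{thm:CramerBound} and \eqref{eqn:CramerBoundApplication}) is a GRH-conditional statement, whereas Proposition \ref{prop:LamzouriProp2.1Gen} is asserted and proved without assuming GRH (only the absence of $\beta_0$).  Invoking Cram\'er therefore downgrades the proposition to a conditional statement; the GRH is only introduced afterwards, in Corollary \ref{cor:LamzouriProp2.1Gen}.  Beyond this, the optimisation over $t_*$ does not obviously close.  The $R_\K(t,T)$ piece on $[x,t_*]$ contributes roughly $(\log^2 t_* - \log^2 x)/T$ before rescaling, which for any $t_*$ large enough to make the beyond-$t_*$ piece negligible exceeds the target by several powers of $\log$; one would also still need to account for $\sum_{|\gamma|\le T}\varrho^{-1}\int_{t_*}^\infty t^{\varrho-2}(\log t+1)/\log^2 t\,dt$, which, without GRH, has no useful bound since $\Re\varrho$ may be arbitrarily close to $1$.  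The paper avoids all of this because the Perron remainder carries the exponentially decaying factor $x^{1-\alpha}$, so the $\alpha$-integration over $(1,\infty)$ converges automatically, with no need for a height cutoff $t_*$ or for mean-square input on $\psi_\K-t$.  If you want to pursue your route, you would have to replace the Cram\'er step by an unconditional argument, and re-derive the tail zero-sum so that it reproduces the exact error shape in the statement; at present the proposal does neither.
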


Once we know Proposition \ref{prop:LamzouriProp2.1Gen}, we can establish the following corollary, which is conditional on the GRH.

\begin{corollary}\label{cor:LamzouriProp2.1Gen}
Assume GRH and let $\half+i\gamma_n$ represent the non-trivial zeros of $\zeta_{\K}(s)$. Then, for any $x\geq 2$ and $T\geq 5$, we have
\begin{equation} \label{eqn:mertenserrorterm}
E_{\K}(x) = 1 + 2\Re \sum_{0 < \gamma_n < T} \frac{x^{i\gamma_n}}{-\half+i\gamma_n} + O \paranthesis{\frac{\sqrt{x}}{T}\left((\log{x})^2 + \frac{\log^2{T}}{\log{x}}\right)}.
\end{equation}
\end{corollary}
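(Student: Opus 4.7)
The plan is to specialise Proposition \ref{prop:LamzouriProp2.1Gen} under the assumption of GRH. Since GRH rules out the exceptional (Landau--Siegel) zero $\beta_0$, the hypothesis of the Proposition is satisfied, and every non-trivial zero of $\zeta_\K(s)$ takes the form $\varrho = \tfrac{1}{2} + i\gamma_n$ with $\gamma_n \in \mathbb{R}$. In particular, $\varrho - 1 = -\tfrac{1}{2} + i\gamma_n$ and $x^{\varrho - \tfrac{1}{2}} = x^{i\gamma_n}$, a quantity of modulus one.

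For the main term, I would pair each non-real zero $\tfrac{1}{2} + i\gamma_n$ with its complex conjugate $\tfrac{1}{2} - i\gamma_n$, which is also a zero of $\zeta_\K$ since the defining Dirichlet series has real coefficients on $\Re(s)>1$. The contribution of such a pair to $\sum_{|\gamma_n|\leq T} x^{i\gamma_n}/(-\tfrac{1}{2}+i\gamma_n)$ equals
\begin{equation*}
    \frac{x^{i\gamma_n}}{-\tfrac{1}{2}+i\gamma_n} + \frac{x^{-i\gamma_n}}{-\tfrac{1}{2}-i\gamma_n} = 2\Re\frac{x^{i\gamma_n}}{-\tfrac{1}{2}+i\gamma_n},
\end{equation*}
so summing over positive ordinates $0 < \gamma_n < T$ reproduces the displayed main term of \eqref{eqn:mertenserrorterm}. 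Here I am implicitly assuming $\zeta_\K(\tfrac{1}{2})\neq 0$; any contribution from such a hypothetical zero at height $0$ would be a harmless explicit constant that could be absorbed.

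For the error term, GRH gives $x^{\Re(\varrho)-\tfrac{1}{2}} = 1$, so the zero-sum in the first error bracket of Proposition \ref{prop:LamzouriProp2.1Gen} collapses to $\sum_{|\gamma_n|<T} 1/\gamma_n^2$. A routine partial-summation argument using the local zero-counting bound $N_\K(T+1) - N_\K(T) \ll \log(T+1)$ from Lemma \ref{lem:useful_for_later} shows $\sum_{\gamma_n\neq 0} 1/\gamma_n^2 < \infty$, so this first error contribution reduces to $O(1/\log x)$. Since this is dominated by the remaining $O(\sqrt{x}\log x / T)$ term in the parameter range of interest, the stated error in \eqref{eqn:mertenserrorterm} follows. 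The corollary is essentially a clean specialisation of the more general Proposition \ref{prop:LamzouriProp2.1Gen}, so no substantive obstacle is anticipated; the only care lies in correctly pairing complex-conjugate zeros and in verifying the convergence $\sum 1/\gamma_n^2 < \infty$.
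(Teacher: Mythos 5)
Your argument follows the same route as the paper's own short proof: specialise Proposition~\ref{prop:LamzouriProp2.1Gen} under GRH, pair conjugate zeros to obtain the $2\Re\sum_{0<\gamma_n<T}$ form, and invoke Lemma~\ref{lem:useful_for_later} to show $\sum_{\gamma_n\neq 0}1/\gamma_n^2 \ll 1$, which collapses the zero-sum in the error. You spell out the conjugate-pairing step, which the paper leaves implicit, and that is a worthwhile addition.

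One point worth flagging honestly, since you already half-noticed it with the phrase ``in the parameter range of interest'': after applying \eqref{sumofordinates}, the residual contribution from the first error bracket of Proposition~\ref{prop:LamzouriProp2.1Gen} is $O(1/\log x)$, and this is \emph{not} dominated by $\frac{\sqrt{x}}{T}\bigl(\log x+\frac{\log^2 T}{\log x}\bigr)$ uniformly over all $T\geq 5$ (take $T$ much larger than $\sqrt{x}\log^2 x$, for instance). Strictly speaking the error in the Corollary should carry an extra additive $O(1/\log x)$; the paper itself quietly reinstates an $O(1)$ term when it applies the Corollary in the proof of Theorem~\ref{thm:lowerupperdensity}. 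So your claim of outright domination is slightly too strong, but this is the same imprecision the authors commit, and it has no bearing on any downstream application since the term is bounded. Apart from this cosmetic issue your proof is correct and matches the paper's approach.
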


\begin{proof}
Using Lemma \ref{lem:useful_for_later} and $\gamma_n \neq 0$, one can see
\begin{equation} \label{sumofordinates}
    \sum_{\abs{\gamma_n}<T} \invert{\gamma_n^2}  \ll 1.
\end{equation}
Now, Corollary \ref{cor:LamzouriProp2.1Gen} follows easily from Proposition \ref{prop:LamzouriProp2.1Gen} and \eqref{sumofordinates}.
\end{proof}

\begin{remark}
In the setting $\K = \mathbb{Q}$, in \cite[Cor.~1]{BrentAccurate}, Brent, Platt, and Trudgian refined Lehman's lemma \cite[Lem.~1]{Lehman}, and used this to establish the sum in \eqref{sumofordinates} over all $\gamma_n > 0$ is $0.02310499\dots$ upto 28 decimal places. To evaluate (or bound with explicit constants) the sum in \eqref{sumofordinates} one could apply similar techniques as Brent \textit{et al.} \cite{BrentAccurate} or Lehman \cite{Lehman}.
\end{remark}

The remainder of this subsection is dedicated to proving Proposition \ref{prop:LamzouriProp2.1Gen}. To do so, we will argue along similar lines to \cite{Lamzouri}, which means we will require the following lemmas.

\begin{lemma}\label{lem:LamLem2.3Gen}
For $x\geq 2$, we have
\begin{equation*}
    - \sum_{N(\PP)\leq x}\log\left(1 - \frac{1}{N(\PP)}\right)
    = \sum_{N(\mathfrak{a})\leq x} \frac{\Lambda_{\K}(\mathfrak{a})}{N(\mathfrak{a})\, \log{N(\mathfrak{a})}} + \frac{1}{\sqrt{x} \log{x}} + O\left(\frac{1}{\sqrt{x} \log^2{x}}\right).
\end{equation*}
\end{lemma}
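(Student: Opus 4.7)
The plan is to expand the left-hand side using the Taylor series
\[
    -\log\left(1 - \frac{1}{N(\PP)}\right) = \sum_{m \geq 1} \frac{1}{m\,N(\PP)^m},
\]
and to observe that the first sum on the right-hand side rearranges, via the definition of $\Lambda_\K$, as
\[
    \sum_{N(\mathfrak{a}) \leq x} \frac{\Lambda_\K(\mathfrak{a})}{N(\mathfrak{a}) \log N(\mathfrak{a})}
    = \sum_{N(\PP)^m \leq x} \frac{1}{m\,N(\PP)^m}.
\]
Subtracting the two double sums collapses the $m = 1$ contribution, and the lemma reduces to proving the ``tail identity''
\[
    D(x) := \sum_{m \geq 2} \frac{1}{m} \sum_{x^{1/m} < N(\PP) \leq x} \frac{1}{N(\PP)^m}
    = \frac{1}{\sqrt{x}\log x} + O\!\left(\frac{1}{\sqrt{x}\log^2 x}\right).
\]

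The main term of $D(x)$ comes entirely from $m = 2$, which I would handle by Abel summation together with the prime ideal theorem in the precise form $\pi_\K(t) = t/\log t + O(t/\log^2 t)$. Writing
\[
    \sum_{\sqrt{x} < N(\PP) \leq x} \frac{1}{N(\PP)^2}
    = \frac{\pi_\K(x)}{x^2} - \frac{\pi_\K(\sqrt{x})}{x} + 2\int_{\sqrt{x}}^x \frac{\pi_\K(t)}{t^3}\,dt,
\]
the boundary term at $\sqrt{x}$ contributes $-\tfrac{2}{\sqrt{x}\log x} + O(\tfrac{1}{\sqrt{x}\log^2 x})$, and substituting the prime ideal theorem into the integral reduces it, after one integration by parts with $u = 1/\log t$ and $dv = dt/t^2$, to $\tfrac{4}{\sqrt{x}\log x} + O(\tfrac{1}{\sqrt{x}\log^2 x})$. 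Summing these and multiplying by $1/2$ produces exactly the main term $1/(\sqrt{x}\log x)$.

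For $m \geq 3$, a crude estimate suffices: partial summation together with the Chebyshev-type bound $\pi_\K(t) \ll t/\log t$ established in Section~\ref{sec:HardysApproach} yields $\sum_{N(\PP) > x^{1/m}} N(\PP)^{-m} \ll 1/(x^{(m-1)/m} \log x)$, whose dominant contribution at $m = 3$ is $O(1/(x^{2/3}\log x))$; this is majorised by $O(1/(\sqrt{x}\log^2 x))$ once $x$ is large, and is absorbed into the error term. The main obstacle I expect is the bookkeeping inside the $m = 2$ calculation, since the boundary contribution and the integral each produce terms of the same order $1/(\sqrt{x}\log x)$ with opposite effects, and the stated main term survives only because of their partial cancellation. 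This is precisely why the weaker Chebyshev bound is not enough here and one must appeal to the prime ideal theorem with a quantitative remainder, which is available for number fields via Landau's work in \cite{LandauEinfuehrung} (or, more strongly, from Corollary~\ref{thm:explicit_formula_final}).
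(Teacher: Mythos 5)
Your proof is correct and is the natural route; the paper's own ``proof'' of this lemma is a one-line citation to \cite[Lem.~2.3]{Lamzouri} with the assertion that it generalises, and your argument — Taylor-expand the logarithm, match the prime-power terms with $N(\PP)^m\leq x$ against the von Mangoldt sum, and extract the $m=2$ tail via Abel summation and the prime ideal theorem with an $O(t/\log^2 t)$ remainder — is exactly how one fills in those missing details, with the boundary/integral cancellation handled correctly. One small inaccuracy in the $m\geq 3$ tail: one actually has $\sum_{N(\PP)>x^{1/m}}N(\PP)^{-m}\ll m/\bigl(x^{(m-1)/m}\log x\bigr)$ (the extra factor of $m$ arises because $\log x^{1/m}=(\log x)/m$), and you should sum over all $3\leq m\ll\log x$ rather than inspect only $m=3$; after the $1/m$ weight the terms are $\ll 1/(x^{(m-1)/m}\log x)$, whose sum over this range is $\ll x^{-2/3}$, still $O\bigl(1/(\sqrt{x}\log^2 x)\bigr)$, so the conclusion is unaffected.
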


\begin{proof}
See \cite[Lem.~2.3]{Lamzouri}, the proof generalises naturally.
\end{proof}

\begin{lemma}\label{lem:LamLem2.4Gen}
For $\alpha > 1$, $x \geq 2$, and $T \geq 5$, we have
\begin{align*}
    \sum_{N(\mathfrak{a})\leq x} \frac{\Lambda_{\K}(\mathfrak{a})}{N(\mathfrak{a})^{\alpha}} 
    = &- \frac{\zeta_{\K}'}{\zeta_{\K}}(\alpha) + \frac{x^{1-\alpha}}{1-\alpha} - \frac{x^{\beta_0 - \alpha}}{\alpha - \beta_0} - \sum_{|\Im{\varrho}| \leq T} \frac{x^{\varrho -\alpha}}{\varrho -\alpha} \\
    &+ O\left(x^{-\alpha} \log{x} + \frac{x^{1-\alpha}}{T} \left(4^{\alpha} + \log^2{x} + \frac{\log^2{T}}{\log{x}}\right) + \frac{1}{T} \sum_{N(\mathfrak{a})\geq 1} \frac{\Lambda_{\K}(\mathfrak{a})}{N(\mathfrak{a})^{\alpha + \tfrac{1}{\log{x}}}}\right),
\end{align*}
where $\varrho$ are the non-trivial zeros of $\zeta_{\K}(s)$ and $\beta_{0}$ is the potential real, exceptional zero.
\end{lemma}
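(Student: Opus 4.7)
The plan is to apply a truncated Perron formula to the Dirichlet series $-\zeta_\K'(s)/\zeta_\K(s) = \sum_{\mathfrak{a}} \Lambda_\K(\mathfrak{a})/N(\mathfrak{a})^s$, shift the resulting contour to the left past the relevant singularities, and sum the residues. Taking $c = 1/\log x$ and invoking the effective truncated Perron formula (as in \cite[Thm.~5.2]{MontgomeryVaughan}), I would write
\begin{equation*}
\sum_{N(\mathfrak{a})\leq x}\frac{\Lambda_\K(\mathfrak{a})}{N(\mathfrak{a})^\alpha} = \frac{1}{2\pi i}\int_{c-iT}^{c+iT}\left(-\frac{\zeta_\K'}{\zeta_\K}(s+\alpha)\right)\frac{x^s}{s}\,ds + \mathcal{E},
\end{equation*}
where $\mathcal{E}$ absorbs both the usual Perron error $\ll T^{-1}\sum_{\mathfrak{a}}\Lambda_\K(\mathfrak{a})/N(\mathfrak{a})^{\alpha+1/\log x}$ and the boundary contribution $\ll x^{-\alpha}\log x$ coming from ideals whose norm lies within a factor of two of $x$ (this latter bound uses $\Lambda_\K(\mathfrak{a})\leq n_\K \log N(\mathfrak{a})$ together with the Chebyshev-type estimate for $\pi_\K$ already established in Section \ref{sec:HardysApproach}).

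Next I would shift the contour to $\Re(s) = -U$ for some large $U>\alpha$ and collect residues. The integrand has simple poles at $s=0$ (residue $-\zeta_\K'(\alpha)/\zeta_\K(\alpha)$), at $s=1-\alpha$ from the simple pole of $\zeta_\K$ at $1$ (residue $x^{1-\alpha}/(1-\alpha)$), at $s=\beta_0-\alpha$ from the potential exceptional zero, and at $s=\varrho-\alpha$ for each non-trivial zero $\varrho$ with $|\Im\varrho|\leq T$. Each zero of $\zeta_\K$ contributes a residue of the form $-x^{\varrho-\alpha}/(\varrho-\alpha)$, and these residues assemble into the main terms displayed in the lemma. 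The shifted vertical integral at $\Re(s)=-U$ decays in $U$ by the functional equation for $\zeta_\K$, so I can send $U \to \infty$ along a sequence avoiding the trivial zeros to make it negligible.

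The remaining work is bounding the two horizontal segments from $c\pm iT$ to $-U\pm iT$. On these segments I would estimate $|\zeta_\K'/\zeta_\K(s+\alpha)|$ by combining the standard partial fraction decomposition $\zeta_\K'/\zeta_\K(w) = O(\log|w|) + \sum_{|\Im\varrho - \Im w|\leq 1}(w-\varrho)^{-1}$ with Lemma \ref{lem:useful_for_later}, after first choosing $T$ to avoid zeros by an amount $\gg 1/\log T$. The portion of the segment lying to the right of the critical strip contributes the $\log^2 x$ factor, while clustering of zeros near height $T$ produces the $\log^2 T/\log x$ factor. On the portion with $\Re(s+\alpha)\leq 0$ I would apply the functional equation and use standard bounds for $\Gamma'/\Gamma$; the resulting Gamma factors on the line $\Re(s+\alpha)\approx -\alpha$ produce the $4^\alpha$ factor. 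Weighting each of these three pieces by $x^c/(T|s|) \ll x^{1-\alpha}/T$ along the horizontal segments yields the stated $x^{1-\alpha}/T$ error.

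The principal obstacle is the third paragraph: producing all three horizontal-segment bounds with the exact factors stated. Once the logarithmic derivative bounds are in place (which mirror \cite[Ch.~12]{MontgomeryVaughan}, with our Lemma \ref{lem:useful_for_later} substituting for the classical zero-counting estimate), the assembly of the main terms and error terms proceeds \emph{mutatis mutandis} from Lamzouri's proof of \cite[Lem.~2.4]{Lamzouri}.
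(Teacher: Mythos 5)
Your proposal mirrors the paper's proof almost exactly: a truncated Perron formula with $c=1/\log x$, a contour shift to $\Re(s)=-U$ collecting residues from the pole of $\zeta_\K$ at $1$, the point $s=0$, the exceptional zero, the non-trivial zeros, and (implicitly absorbed in the error) the trivial zeros, followed by bounding the horizontal segments via the partial-fraction expansion of $\zeta_\K'/\zeta_\K$ and Lemma \ref{lem:useful_for_later}. The only minor presentational slip is that you speak of ``choosing $T$ to avoid zeros'' whereas $T$ is given; the paper instead selects an auxiliary $T_0\in[T,T+1]$ well-separated from ordinates of zeros, integrates up to height $T_0$, and bounds the contribution of zeros with $T\le|\Im\varrho|\le T_0$ as $\ll x^{1-\alpha}\log T/T$ --- a point worth making explicit, though it does not change the substance.
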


\begin{proof}
Almost all aspects of the proof generalise naturally from \cite[Lem.~2.4]{Lamzouri}, but there are some technical considerations one should be careful with, including the potential exceptional zero. Therefore, we present a generalised summary of Lamzouri's method in \cite[Lem.~2.4]{Lamzouri}, with extra details whenever they are required.

We start by noting that since there are $O(\log T)$ non-trivial zeros of $\zeta_\K(s)$ with ordinate in $[T,T+1]$, there exists a $T_0 \in [T,T + 1]$ which has distance $\gg 1/\log{T}$ from the ordinate of the nearest zero of $\zeta_{\K}(s)$. Now, suppose $c = 1/\log{x}$ and consider the contour integral
\begin{equation*}
    \mathfrak{I}_{\K} = \frac{1}{2\pi i} \int_{c-iT_0}^{c+iT_0} -\frac{\zeta_{\K}'}{\zeta_{\K}}(\alpha + s) \frac{x^s}{s}\,ds .
\end{equation*}
First, evaluate $\mathfrak{I}_{\K}$ using Perron's formula. Analogous to \cite[(2.6)]{Lamzouri}, one obtains
\begin{equation*}
    \mathfrak{I}_{\K}
    = \sum_{N(\mathfrak{a})\leq x} \frac{\Lambda_{\K}(\mathfrak{a})}{N(\mathfrak{a})^{\alpha}} + O\left(x^{-\alpha} \log{x} + \frac{1}{T} \left( x^{1-\alpha} \log^2{x} + \sum_{N(\mathfrak{a})\geq 1} \frac{\Lambda_{\K}(\mathfrak{a})}{N(\mathfrak{a})^{\alpha + \tfrac{1}{\log{x}}}}\right)\right).
\end{equation*}
Second, evaluate $\mathfrak{I}_{\K}$ by moving the line of integration to the line $\Re(s) = -U$, where $U > 0$ is large, and invoking Cauchy's residue theorem. That is,
\begin{equation*}
    \mathfrak{I}_{\K} = \frac{1}{2\pi i} \left( \int_{\mathcal{C}} -\frac{\zeta_{\K}'}{\zeta_{\K}}(\alpha + s) \frac{x^s}{s}\,ds - \sum_{i=2}^4 \int_{\mathcal{C}_i} -\frac{\zeta_{\K}'}{\zeta_{\K}}(\alpha + s) \frac{x^s}{s}\,ds\right),
\end{equation*}
where $\mathcal{C} = \mathcal{C}_1 \cup \mathcal{C}_2 \cup \mathcal{C}_3 \cup \mathcal{C}_4$ is a closed contour such that
\begin{align*}
    \mathcal{C}_1 &= [c - iT_0, c + iT_0], &&
    \mathcal{C}_2 = [c + iT_0, -U + iT_0], \\
    \mathcal{C}_3 &= [-U + iT_0, -U - iT_0], &&
    \mathcal{C}_4 = [-U - iT_0, c - iT_0].
\end{align*}
It follows that $U$ should also be chosen such that $U \neq  \alpha + m$ for any $m\in \mathbb{N}$, so that $- U$ does not equal a trivial zero of $\zeta_{\K}(s)$. To estimate the integrals over $\mathcal{C}_i$, Lamzouri's observations generalise naturally; i.e.
\begin{equation*}
    - \frac{1}{2\pi i} \sum_{i=2}^4 \int_{\mathcal{C}_i} -\frac{\zeta_{\K}'}{\zeta_{\K}}(\alpha + s) \frac{x^s}{s}\,ds \ll \frac{x^{1-\alpha}}{T} \left( 4^{\alpha} + \log{x} + \frac{\log^2{T}}{\log{x}}\right) + \frac{1}{T} \sum_{N(\mathfrak{a}) \geq 1} \frac{\Lambda_{\K}(\mathfrak{a})}{N(\mathfrak{a})^{\alpha + c}} .
\end{equation*}
Next, using the properties of $\zeta_{\K}(s)$ we introduced in Section \ref{ssec:DZF}, invoke Cauchy's residue theorem to evaluate the closed contour integral to obtain
\begin{align}
    \frac{1}{2\pi i}\int_{\mathcal{C}} -\frac{\zeta_{\K}'}{\zeta_{\K}}(\alpha + s) \frac{x^s}{s}\,ds \nonumber
    = &- \frac{\zeta_{\K}'}{\zeta_{\K}}(\alpha) + \frac{x^{1-\alpha}}{1-\alpha} - r_{\K} - \frac{x^{\beta_0 - \alpha}}{\alpha - \beta_0} \nonumber
    - \sum_{|\Im{\varrho}| \leq T} \frac{x^{\varrho -\alpha}}{\varrho -\alpha} \\
    &- r_2 \sum_{m \leq \tfrac{U-1-\alpha}{2}} \frac{x^{-2m - 1 -\alpha}}{2m + 1 +\alpha} - (r_1 + r_2) \sum_{m \leq \tfrac{U-\alpha}{2}} \frac{x^{-2m -\alpha}}{2m +\alpha} , \label{eq:contourintegral}
\end{align}
in which $r_{\K} = x^{-\alpha}/\alpha$ if $r > 0$, and $r_{\K} = 0$ otherwise. Now, we have
\begin{align*}
    \sum_{m \leq \tfrac{U-1-\alpha}{2}} \frac{x^{-2m - 1 -\alpha}}{2m + 1 +\alpha} \ll x^{-3 -\alpha},\,
    \sum_{m \leq \tfrac{U-\alpha}{2}} \frac{x^{-2m -\alpha}}{2m +\alpha} \ll x^{-2 -\alpha},\,
    \sum_{T \leq |\Im{\varrho}| \leq T_0} \frac{x^{\varrho -\alpha}}{\varrho -\alpha} \ll \frac{x^{1-\alpha} \log{T}}{T},
\end{align*}
and $r_{\K} \ll x^{-\alpha}$. Substituting these observations in \eqref{eq:contourintegral}, the result follows.
\end{proof}

\begin{lemma}\label{lem:LamLem2.5Gen}
For any $x \geq 2$,
\begin{equation*}
    \log{\zeta_{\K}(\sigma)} + \int_{\sigma}^\infty \frac{x^{1-t}}{1-t}\,dt \to \log{\kappa_{\K}} + \log\log{x} + \gamma,
    \qquad\text{as}\qquad \sigma\to 1^{+}.
\end{equation*}
\end{lemma}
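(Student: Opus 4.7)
The statement is an identity in which both the $\log\zeta_{\K}(\sigma)$ and the integral diverge as $\sigma\to 1^{+}$ in opposite directions, so the plan is to extract the matching logarithmic singularities from each piece separately and verify that they cancel.

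First, I would use the fact that $\zeta_{\K}$ has a simple pole at $s=1$ with residue $\kappa_{\K}$, recorded via the class number formula in Section \ref{ssec:DZF}. From the Laurent expansion $\zeta_{\K}(\sigma)=\kappa_{\K}/(\sigma-1)+c_0+O(\sigma-1)$, taking logarithms yields
\begin{equation*}
    \log \zeta_{\K}(\sigma) = -\log(\sigma-1) + \log\kappa_{\K} + o(1) \quad \text{as } \sigma\to 1^{+}.
\end{equation*}
Next, I would handle the integral by the substitution $u=(t-1)\log x$, which transforms it into
\begin{equation*}
    \int_{\sigma}^{\infty} \frac{x^{1-t}}{1-t}\,dt = -\int_{(\sigma-1)\log x}^{\infty} \frac{e^{-u}}{u}\,du = -E_1\bigl((\sigma-1)\log x\bigr),
\end{equation*}
where $E_1$ is the standard exponential integral. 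This reduces the problem to a calculation that is essentially Hardy's identity \eqref{Hardy_A}, which is already cited in Section \ref{sec:HardysApproach}. Using the classical expansion $E_1(z) = -\gamma - \log z + O(z)$ as $z\to 0^{+}$, one gets
\begin{equation*}
    \int_{\sigma}^{\infty} \frac{x^{1-t}}{1-t}\,dt = \gamma + \log(\sigma-1) + \log\log x + O\bigl((\sigma-1)\log x\bigr).
\end{equation*}

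Adding the two asymptotics, the singular $\mp\log(\sigma-1)$ contributions cancel exactly, and what remains in the limit $\sigma\to 1^{+}$ is $\log\kappa_{\K}+\gamma+\log\log x$, which is precisely the claim. There is no serious obstacle: the only inputs are the residue $\kappa_{\K}$ of $\zeta_{\K}$ at $s=1$ and the behavior of $E_1$ near the origin, both of which are classical and already appear in the paper's toolkit. If one preferred to avoid invoking $E_1$ directly, the change of variables $\delta=\log x$, $a=\sigma-1$ makes \eqref{Hardy_A} applicable verbatim (with the roles of $\delta$ and $a$ interchanged via a scaling), but the substance of the argument is identical.
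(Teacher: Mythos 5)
Your proof is correct. The paper itself gives no details here --- its ``proof'' is the single line that \cite[Lem.~2.5]{Lamzouri} generalises naturally --- and what you have written out is exactly what that generalisation amounts to: extract the $-\log(\sigma-1)$ singularity from $\log\zeta_{\K}(\sigma)$ via the simple pole with residue $\kappa_{\K}$ (the residue being $\kappa_{\K}$ rather than $1$ is the sole change from the classical case, and it is precisely what produces the extra $\log\kappa_{\K}$ in the limit), rewrite the integral as $-E_1\bigl((\sigma-1)\log x\bigr)$, and use the small-argument expansion $E_1(z)=-\gamma-\log z+O(z)$ so that the two $\log(\sigma-1)$ terms cancel, leaving $\log\kappa_{\K}+\gamma+\log\log x$. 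Your remark that one could instead invoke \eqref{Hardy_A} with $\delta=\log x$ and $a=\sigma-1$ is also accurate, since \eqref{Hardy_A} is just a restatement of the same $E_1$ expansion.
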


\begin{proof}
See \cite[Lem.~2.5]{Lamzouri}, the proof generalises naturally.
\end{proof}

We are now in a position to prove Proposition \ref{prop:LamzouriProp2.1Gen}.

\begin{proof}[Proof of Proposition \ref{prop:LamzouriProp2.1Gen}]
Suppose $\beta_0$ does \textit{not} exist. Let $\sigma > 1$ be fixed. Using Lemma \ref{lem:LamLem2.4Gen}, we have
\begin{align}
    \sum_{N(\mathfrak{a})\leq x} &\frac{\Lambda_{\K}(\mathfrak{a})}{N(\mathfrak{a})^{\sigma} \log{N(\mathfrak{a})}}
    = \int_{\sigma}^\infty \sum_{N(\mathfrak{a})\leq x} \frac{\Lambda_{\K}(\mathfrak{a})}{N(\mathfrak{a})^{t}}\,dt \nonumber\\
    &= \log{\zeta_{\K}(\sigma)} + \int_{\sigma}^\infty \frac{x^{1-t}}{1-t}\,dt - \sum_{|\Im{\varrho}| \leq T} \int_{\sigma}^\infty \frac{x^{\varrho - t}}{\varrho -t}\,dt + \epsilon(x,T) \label{eqn:into_me}
\end{align}
in which
\begin{align*}
    \epsilon(x,T)
    &\ll \frac{1}{T}\left(\log{x} + \frac{\log^2{T}}{\log^2{x}}\right) + \frac{1}{x} + \frac{1}{T} \sum_{N(\mathfrak{a})\geq 1} \frac{\Lambda_{\K}(\mathfrak{a})}{N(\mathfrak{a})^{1 + \tfrac{1}{\log{x}}} \log{N(\mathfrak{a})}} \\
    &\ll \frac{1}{T}\left(\log{x} + \frac{\log^2{T}}{\log^2{x}}\right) + \frac{1}{x}.
\end{align*}
Therefore, taking the limit as $\sigma\to 1^{+}$ and using Lemma \ref{lem:LamLem2.5Gen}, the right-hand side of \eqref{eqn:into_me} becomes 
\begin{equation} \label{eqn:limitexpression}
    \log{\kappa_{\K}} + \log\log{x} + \gamma - \sum_{|\Im{\varrho}| \leq T} x^{\varrho} \int_1^\infty \frac{x^{- t}}{\varrho -t}\,dt + O\paranthesis{\frac{1}{T}\left(\log{x} + \frac{\log^2{T}}{\log^2{x}}\right) + \frac{1}{x}}.
\end{equation}
Lamzouri \cite[p.~105]{Lamzouri} has shown
\begin{equation*}
    \int_1^\infty \frac{x^{- t}}{\varrho -t}\,dt = \frac{1}{x \log{x} (\varrho - 1)} + O\left(\frac{1}{x \log^2{x} \Im{\varrho}^2}\right).
\end{equation*}
Insert this into \eqref{eqn:limitexpression}, and the result follows using Lemma \ref{lem:LamLem2.3Gen}.
\end{proof}

\subsection{Lower and upper densities}\label{ssec:lowerupperdensities}
Next, we prove Theorem \ref{thm:lowerupperdensity}, which establishes that if the logarithmic density of $\mathcal{M}_{\K}$ exists, then it must be positive but not equal to one. We work along similar lines to Lamzouri's proof of \cite[Thm.~1.1]{Lamzouri}.

\begin{proof}[Proof of Theorem \ref{thm:lowerupperdensity}]
We write $x = e^Y$. Then, we have
\begin{align}
    \invert{\log x} \int_{t \in \mathcal{M}_{\K} \cap [2,x]} \frac{dt}{t} &= \invert{Y} \text{meas}\, \{ \log 2 \le y \le Y \mid e^y \in \mathcal{M}_\K \} \nonumber \\
    & = \invert{Y} \text{meas}\, \{ \log 2 \le y \le Y \mid E_\K (e^y) > 0 \} \label{measureofsetMK}
\end{align}
where $y = \log t$. By Corollary \ref{cor:LamzouriProp2.1Gen} and (\ref{sumofordinates}), we have for $y \ge \log 2$ and $T \ge 5$,
\begin{align*}
    E_{\K}(e^y) &= \sum_{0 < \gamma_n < T} \frac{-\cos(\gamma_n y)+2 \gamma_n \sin(\gamma_n y)}{\invert{4}+\gamma_n^2} + O \paranthesis{1+\frac{e^{y/2}}{T}\paranthesis{y+\frac{\log^2T}{y}}} \\
    &= 2 \sum_{0 < \gamma_n < T} \frac{\sin(\gamma_n y)}{\gamma_n} + O \paranthesis{1+\frac{e^{y/2}}{T}\paranthesis{y+\frac{\log^2T}{y}}}.
\end{align*}
We choose $T = e^Y$ and if $Y$ is large enough, there exists a constant $A > 0$ such that
\begin{equation*}
    2 \paranthesis{\sum_{0 < \gamma_n < e^Y} \frac{\sin(\gamma_n y)}{\gamma_n}-A} < E_{\K}(e^y) < 2 \paranthesis{\sum_{0 < \gamma_n < e^Y} \frac{\sin(\gamma_n y)}{\gamma_n}+A}
\end{equation*}
for all $2 \le y \le Y$. Therefore, from \eqref{measureofsetMK},
\begin{align}
    \invert{\log x} \int_{t \in \mathcal{M}_{\K} \cap [2,x]} \frac{dt}{t} & \ge \invert{Y} \text{meas} \, \left\{ 1 \le y \le Y \mid \sum_{0 < \gamma_n < e^Y} \frac{\sin(\gamma_n y)}{\gamma_n} > A \right\} + O \paranthesis{\invert{Y}}, \label{eqns:lowerdensity} \\
    \invert{\log x} \int_{t \in \mathcal{M}_{\K} \cap [2,x]} \frac{dt}{t} & \le \invert{Y} \text{meas} \, \left\{ 1 \le y \le Y \mid \sum_{0 < \gamma_n < e^Y} \frac{\sin(\gamma_n y)}{\gamma_n} > -A \right\} + O \paranthesis{\invert{Y}}. \label{eqns:upperdensity}
\end{align}
Now, as in \cite[Sec.~2.2]{rubinstein1994chebyshev}, using Littlewood's approach \cite{littlewood1914distribution}, we have
that
\begin{align}
    \invert{Y} \text{meas}\,\left\{ 1 \le y \le Y \mid \sum_{0 < \gamma_n < e^Y} \frac{\sin(\gamma_n y)}{\gamma_n} > \lambda \right\} & \ge c_1 \exp(-\exp(-c_2 \lambda)), \label{eqns:RSmeasurepos} \\
    \invert{Y} \text{meas}\,\left\{ 1 \le y \le Y \mid \sum_{0 < \gamma_n < e^Y} \frac{\sin(\gamma_n y)}{\gamma_n} < -\lambda \right\} & \ge c_1 \exp(-\exp(-c_2 \lambda)) \label{eqns:RSmeasureneg}
\end{align}
for some absolute positive constants $c_1,c_2$, if $Y$ is large enough. Combining \eqref{eqns:lowerdensity}, \eqref{eqns:upperdensity}, \eqref{eqns:RSmeasurepos}, and \eqref{eqns:RSmeasureneg}, we obtain
\begin{equation*}
    \frac{c_1}{2}\exp(-\exp(-c_2 A)) \le \invert{\log x} \int_{t \in \mathcal{M}_{\K} \cap [2,x]} \frac{dt}{t} \le 1 - \frac{c_1}{2} \exp (-\exp(-c_2A)),
\end{equation*}
if $Y = \log x$ is large enough. In particular, $\underline{\delta}(\mathcal{M}_{\K}) > 0$ and $\overline{\delta}(\mathcal{M}_{\K}) < 1$.
\end{proof}

\subsection{Limiting distribution}\label{ssec:limitingdistribution}

Let $\phi:[0,\infty) \to \mathbb{R}$ and let $y_0$ be a non-negative constant such that $\phi$ is square-integrable on $(0,y_0]$. Suppose there exists $(\lambda_n)_{n \in \mathbb{N}}$, a non-decreasing sequence of positive numbers which tends to infinity, $(r_n)_{n \in \mathbb{N}}$, a complex sequence, and $c$ a real constant such that for $y \ge y_0$,
\begin{equation} \label{eqn:phiform}
    \phi(y) =  c + \Re \paranthesis{\sum_{\lambda_n \le X} r_n e^{i\lambda_n y}} + \mathcal{E}(y,X)
\end{equation}
for any $X \ge X_0 > 0$ and $\mathcal{E}(y,X)$ such that
\begin{equation} \label{eqn:phierror}
    \lim_{Y \to \infty} \invert{Y} \int_{y_0}^Y \abs{\mathcal{E}(y,e^Y)}^2 dy = 0.
\end{equation}
The following result, which is a restatement of \cite[Thm.~1.2]{akbary2014limiting}, prescribes the conditions on $(\lambda_n)_{n \in \mathbb{N}}$ and $(r_n)_{n \in \mathbb{N}}$ under which $\phi$ has a limiting distribution.

\begin{thm}\label{thm:akbarylimiting}
Let $\phi:[0,\infty) \to \mathbb{R}$ satisfy \eqref{eqn:phiform} and \eqref{eqn:phierror}. Let $\alpha,\beta > 0$ and $\gamma \ge 0$. Assume either of the following conditions:
\begin{enumerate}
\item $\beta > \half$ and
\begin{equation}
    \sum_{T < \lambda_n \le T+1} \abs{r_n} \ll \frac{(\log T)^\gamma}{T^\beta},
\end{equation}
for $T > 0$.
\item $\beta \le \min\{1,\alpha\}, \alpha^2 + \alpha/2 < \beta^2 + \beta$, and
\begin{equation}
    \sum_{S < \lambda_n \le T} \abs{r_n} \ll \frac{(T-S)^\alpha (\log T)^\gamma}{S^\beta},
\end{equation}
for $T > S > 0$.
\end{enumerate}
Then $\phi(y)$ is a $B^2$-almost periodic function and therefore possesses a limiting distribution. 
\end{thm}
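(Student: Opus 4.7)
The plan is to show that $\phi$ lies in the $B^2$-closure of the space of generalized trigonometric polynomials, i.e.\ that $\phi$ is $B^2$-almost periodic; the existence of a limiting distribution then follows from the standard Bohr--Jessen--Wintner framework (cf.\ the references in \cite{akbary2014limiting}). Throughout, I write $\|f\|_{B^2}^2 := \limsup_{Y \to \infty} \invert{Y} \int_0^Y \abs{f(y)}^2\,dy$ for the Besicovitch semi-norm.

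For each $X > 0$, the natural approximant is the Bohr almost periodic polynomial
\begin{equation*}
    \phi_X(y) := c + \Re \paranthesis{\sum_{\lambda_n \le X} r_n e^{i\lambda_n y}}.
\end{equation*}
By hypothesis \eqref{eqn:phiform}, $\phi(y) - \phi_X(y) = \mathcal{E}(y,X)$ for $y \ge y_0$, and condition \eqref{eqn:phierror} (applied along $X = e^Y$) immediately gives $\|\phi - \phi_X\|_{B^2} \to 0$. By almost-orthogonality of the exponentials at distinct frequencies,
\begin{equation*}
    \|\phi_{X_2} - \phi_{X_1}\|_{B^2}^2 = \half \sum_{X_1 < \lambda_n \le X_2} \abs{r_n}^2,
\end{equation*}
so the entire problem reduces to showing that $\sum_n \abs{r_n}^2 < \infty$ under either of the two hypotheses.

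Under condition (1), the elementary inequality $\sum a_n^2 \le (\sum a_n)^2$ (valid when $a_n \ge 0$) yields $\sum_{T < \lambda_n \le T+1} \abs{r_n}^2 \ll (\log T)^{2\gamma}/T^{2\beta}$, which is summable over integer $T \ge 1$ precisely because $2\beta > 1$. Under condition (2), this naive bound is too wasteful: on a dyadic block it loses a factor equal to the number of summands, and no longer yields convergence. Instead, one decomposes each dyadic block $(2^k, 2^{k+1}]$ into sub-intervals of a carefully chosen length $\Delta = 2^{k\theta}$, applies the hypothesis on each sub-interval, and sums. Optimizing the free parameter $\theta \in (0,1)$ against the constraint $\beta \le \min\{1,\alpha\}$, the arithmetic inequality $\alpha^2 + \alpha/2 < \beta^2 + \beta$ is exactly what makes the resulting exponent in $k$ negative, so that the geometric series converges. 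This delicate trade-off between interval width and number of intervals is the principal technical obstacle, and I would follow the argument of \cite[Thm.~1.2]{akbary2014limiting} closely rather than reproduce its details.
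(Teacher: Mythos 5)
The paper offers no proof of this theorem: the statement is imported verbatim from \cite[Thm.~1.2]{akbary2014limiting}, and the citation is the entire argument. There is therefore no in-paper proof for your sketch to match, and the appropriate move here is citation rather than reproof.

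That said, your sketch contains a claim that would not survive a careful write-up. The assertion that ``the entire problem reduces to showing that $\sum_n \abs{r_n}^2 < \infty$'' hides the real difficulty. The identity $\lVert \phi_{X_2}-\phi_{X_1}\rVert_{B^2}^2 = \half\sum_{X_1<\lambda_n\le X_2}\abs{r_n}^2$ is exact only after taking $Y\to\infty$; but the error hypothesis \eqref{eqn:phierror} couples the truncation level to $Y$ along the diagonal $X=e^Y$, so the approximation argument must control $\frac{1}{Y}\int_{y_0}^Y\abs{\phi_{e^Y}(y)-\phi_X(y)}^2\,dy$ at \emph{finite} $Y$. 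There the off-diagonal cross-terms, of order
\begin{equation*}
    \frac{1}{Y}\sum_{\substack{m\neq n\\ X<\lambda_m,\lambda_n\le e^Y}}\frac{\abs{r_m}\abs{r_n}}{\abs{\lambda_m-\lambda_n}},
\end{equation*}
do not vanish, and for frequencies that cluster (as zeta ordinates do, at spacing on the order of $1/\log T$) these terms are precisely what conditions (1) and (2) are designed to tame: the requirement $\beta>\half$, or $\alpha^2+\alpha/2<\beta^2+\beta$ together with $\beta\le\min\{1,\alpha\}$, is calibrated against the number and proximity of frequencies in short windows, not merely against the finiteness of $\sum\abs{r_n}^2$ (which, for densely packed frequencies, would not by itself deliver the needed uniform-in-$Y$ bound). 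Thus the part of the argument you defer to \cite{akbary2014limiting} is the essential content of the theorem, and the ``reduction'' you state before deferring is misleading.
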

While Theorem \ref{thm:akbarylimiting} can be directly used to show the existence of a limiting distribution, the following corollary (which is a restatement of \cite[Cor.~1.3]{akbary2014limiting}) makes the task much easier.

\begin{corollary}\label{cor:akbarylimiting}
Let $\phi:[0,\infty) \to \mathbb{R}$ satisfy \eqref{eqn:phiform} and \eqref{eqn:phierror}. Assume that $r_n \ll \lambda_n^{-\beta}$ for $\beta > \half$, and
\begin{equation} \label{ineq:rncond}
    \sum_{T < \lambda_n < T+1} 1 \ll \log T.
\end{equation}    
Then $\phi(y)$ is a $B^2$-almost periodic function and therefore possesses a limiting distribution.
\end{corollary}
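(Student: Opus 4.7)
The plan is to deduce the corollary directly from Theorem \ref{thm:akbarylimiting} by verifying its hypothesis (1). The two hypotheses of Theorem \ref{thm:akbarylimiting} are alternatives, and the decay assumption $r_n \ll \lambda_n^{-\beta}$ with $\beta > 1/2$ immediately points to condition (1), whose burden is a bound of the shape $\sum_{T < \lambda_n \le T+1} |r_n| \ll (\log T)^\gamma / T^\beta$ for some $\gamma \ge 0$. The assumed density bound \eqref{ineq:rncond} is exactly the tool needed to reduce the short sum of $|r_n|$ to a count of $\lambda_n$ in a unit interval.

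More concretely, I would argue as follows. Since $\phi$ already satisfies \eqref{eqn:phiform} and \eqref{eqn:phierror} by hypothesis, it suffices to check the quantitative condition on the coefficients. For any $T > 0$, on the range $T < \lambda_n \le T+1$ the monotonicity of $x \mapsto x^{-\beta}$ gives $\lambda_n^{-\beta} \le T^{-\beta}$ (for $T$ sufficiently large), so
\begin{equation*}
    \sum_{T < \lambda_n \le T+1} |r_n|
    \ll \sum_{T < \lambda_n \le T+1} \lambda_n^{-\beta}
    \le T^{-\beta} \sum_{T < \lambda_n \le T+1} 1
    \ll \frac{\log T}{T^\beta}.
\end{equation*}
This is precisely the assumption in Theorem \ref{thm:akbarylimiting}(1) with the same $\beta > 1/2$ and with $\gamma = 1$. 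Applying Theorem \ref{thm:akbarylimiting} then yields that $\phi$ is $B^2$-almost periodic and hence possesses a limiting distribution.

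There is essentially no obstacle beyond carefully handling the small-$T$ behaviour of the bound, which is harmless because Theorem \ref{thm:akbarylimiting} only requires the estimate for all $T > 0$ up to an implied constant; one can absorb any finite initial segment into the implicit constant since $\lambda_n \to \infty$ ensures only finitely many $\lambda_n$ lie below any fixed threshold. No new analytic input is required, so the corollary is genuinely a convenient specialisation of Theorem \ref{thm:akbarylimiting}, tailored to the situation where the coefficients decay like a power of $\lambda_n$ and the frequencies have at most logarithmic density in unit intervals, which is exactly the setting produced by Corollary \ref{cor:LamzouriProp2.1Gen} via Lemma \ref{lem:useful_for_later}.
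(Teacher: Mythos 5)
Your derivation is correct, and it is the natural reduction: the density bound \eqref{ineq:rncond} and the pointwise decay $r_n \ll \lambda_n^{-\beta}$ together give $\sum_{T < \lambda_n \le T+1}|r_n| \ll T^{-\beta}\log T$, which is exactly hypothesis (1) of Theorem \ref{thm:akbarylimiting} with $\gamma=1$. Note, however, that the paper does not supply its own proof of this corollary; it presents it simply as a restatement of Corollary 1.3 of Akbary, Ng, and Shahabi \cite{akbary2014limiting}, so there is no internal proof to compare against. Your argument is the expected one and is consistent with how that reference deduces its Corollary 1.3 from its Theorem 1.2. One cosmetic point: the inequality $\lambda_n^{-\beta} \le T^{-\beta}$ on $T < \lambda_n$ holds for every $T>0$ (not merely for $T$ large), so the parenthetical hedge is unnecessary; and the summation ranges $T<\lambda_n<T+1$ versus $T<\lambda_n\le T+1$ differ only by the possible atoms at $T+1$, which are already controlled by applying \eqref{ineq:rncond} at a shifted $T$ and so do not affect the $\ll$ bound.
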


Using Corollary \ref{cor:akbarylimiting}, we are in a position to state the final result, which establishes that the function $E_{\K}(x)$, defined in \eqref{defn:EK}, has a limiting distribution.

\begin{thm} \label{thm:limitingdistribution}
Suppose GRH is true. Then $E_{\K}(x)$ has a limiting distribution, that is, there exists a probability measure $\mu_\K$ on $\mathbb{R}$ such that
\begin{equation}
    \lim_{x \to \infty} \invert{\log x} \int_2^x f(E_\K(t)) \frac{dt}{t} = \int_{-\infty}^{\infty} f(t) d \mu_\K,
\end{equation}
for all bounded continuous functions $f$ on $\mathbb{R}$.
\end{thm}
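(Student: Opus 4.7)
The plan is to apply Corollary \ref{cor:akbarylimiting} to the function $\phi(y) := E_{\K}(e^y)$ via the change of variable $x = e^y$. Assuming GRH, the explicit formula of Corollary \ref{cor:LamzouriProp2.1Gen} already presents $E_{\K}(e^y)$ in a shape directly compatible with \eqref{eqn:phiform}: identifying the truncation parameter $X$ with $T$, we read off the constant $c = 1$, the frequencies $\lambda_n = \gamma_n$ (the positive ordinates of the non-trivial zeros of $\zeta_{\K}$, listed with multiplicity), the complex coefficients
\begin{equation*}
    r_n = \frac{2}{-\tfrac{1}{2} + i\gamma_n},
\end{equation*}
and an error term satisfying
\begin{equation*}
    \mathcal{E}(y, X) \ll \frac{e^{y/2}}{X}\left(y + \frac{\log^2 X}{y}\right).
\end{equation*}

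To invoke Corollary \ref{cor:akbarylimiting} I must verify (a) the decay $r_n \ll \lambda_n^{-\beta}$ for some $\beta > 1/2$, (b) the density estimate $\sum_{T < \lambda_n < T+1} 1 \ll \log T$, and (c) the averaged $L^2$ bound \eqref{eqn:phierror}. Item (a) is immediate since $|r_n| = 2/\sqrt{1/4 + \gamma_n^2} \ll \gamma_n^{-1}$, so one may take $\beta = 1$. Item (b) is exactly the content of Lemma \ref{lem:useful_for_later}. The square-integrability of $\phi$ on an initial interval $(0, y_0]$ with $y_0 = \log 2$ is arranged by extending $\phi$ by zero there; the contribution of this bounded region to the logarithmic averages below is $O(1/Y)$ and may be neglected.

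The main technical step is (c). Setting $X = e^Y$ in the error bound and squaring yields
\begin{equation*}
    |\mathcal{E}(y, e^Y)|^2 \ll \frac{e^y}{e^{2Y}}\left(y^2 + \frac{Y^4}{y^2}\right)
\end{equation*}
uniformly for $y_0 \leq y \leq Y$. The elementary estimates $\int_{y_0}^Y e^y y^2\,dy \ll Y^2 e^Y$ and $\int_{y_0}^Y e^y y^{-2}\,dy \ll e^Y$ then give
\begin{equation*}
    \frac{1}{Y} \int_{y_0}^Y |\mathcal{E}(y, e^Y)|^2\, dy \ll \frac{Y^3}{e^Y} \longrightarrow 0 \quad\text{as}\quad Y \to \infty,
\end{equation*}
which is \eqref{eqn:phierror}. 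This is the only genuinely delicate point: it hinges on the fortuitous fact that in Corollary \ref{cor:LamzouriProp2.1Gen} the free parameter $T$ and the upper bound of integration are decoupled, so one can afford $X = e^Y$ which forces the truncation error to decay exponentially in $Y$.

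With all three hypotheses in place, Corollary \ref{cor:akbarylimiting} yields that $\phi(y) = E_{\K}(e^y)$ is $B^2$-almost periodic, and consequently admits a limiting distribution $\mu_{\K}$ on $\mathbb{R}$ in the sense that $\frac{1}{Y}\int_{y_0}^Y f(\phi(y))\,dy \to \int_{-\infty}^{\infty} f\,d\mu_{\K}$ for every bounded continuous $f$. The substitution $t = e^y$, $dt/t = dy$, $Y = \log x$ converts this statement precisely into the logarithmic-density form claimed in the theorem, completing the proof.
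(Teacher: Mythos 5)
Your proposal is correct and takes essentially the same approach as the paper: both apply Corollary \ref{cor:akbarylimiting} to $\phi(y) = E_\K(e^y)$ using the explicit formula of Corollary \ref{cor:LamzouriProp2.1Gen}, with the same identifications $c=1$, $\lambda_n = \gamma_n$, $r_n = 2/(-\tfrac12 + i\gamma_n)$, the choice $X = e^Y$, and Lemma \ref{lem:useful_for_later} for the zero-density input. You supply slightly more detail than the paper (the integral estimate leading to $Y^3/e^Y \to 0$ and the extension of $\phi$ to $(0,y_0]$), where the paper simply declares the $L^2$ bound ``straightforward using integration by parts.''
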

\begin{proof}
From \eqref{eqn:mertenserrorterm}, we can see that $E_\K(x)$ can be written in the form given in \eqref{eqn:phiform} by setting
\begin{equation*}
    c = 1,\qquad \lambda_n = \gamma_n,\qquad r_n = \frac{2}{-\half+i\gamma_n},\qquad \text{and}\qquad  y = \log x.
\end{equation*}
To apply Corollary \ref{cor:akbarylimiting}, we must show that $\mathcal{E}(y,X)$ satisfies condition \eqref{eqn:phierror}, that is,
\begin{equation}
    \lim_{Y \to \infty} \invert{Y} \int_0^Y \abs{ \frac{e^{y/2}}{e^Y} \paranthesis{y+\frac{Y^2}{y}} }^2 dy = 0.
\end{equation}
This is straightforward using integration by parts. We also have that $r_n \ll {\gamma_n}^{-1}$, and the number of zeros of $\zeta_\K(s)$ in the interval $(T,T+1)$ is $O(\log T)$ by Lemma \ref{lem:useful_for_later}. This enables us to apply Corollary \ref{cor:akbarylimiting}, whence the result follows.
\end{proof}

Along with GRH, if we also assume GLI, we can prove an explicit formula for the Fourier transform of $\mu_{\K}$.

\begin{proposition}\label{prop:muFT}
Assume GRH and GLI. Then, for any number field $\K$, the Fourier transform of $\mu_{\K}$ is given by
\begin{equation}
    \widehat{\mu_{\K}}(t) = \int_{-\infty}^\infty e^{-it} d\mu_{\K} = e^{-it} \prod_{\gamma_n > 0} J_0 \paranthesis{\frac{2t}{\sqrt{\invert{4}+{\gamma_n}^2}}},
\end{equation}
for all $t \in \mathbb{R}$, where $J_0(t) = \sum_{m=0}^\infty (-1)^m (t/2)^{2m}/{(m!)}^2$ is the Bessel function of order 0.
\end{proposition}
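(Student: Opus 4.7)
The plan is to compute $\widehat{\mu_{\K}}(t)$ directly from the definition, approximating $E_{\K}(e^y)$ by a finite sum of oscillatory terms whose joint equidistribution is controlled by GLI, and then evaluating each resulting one-dimensional integral via the classical Bessel integral representation.

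First, since $\mu_{\K}$ is the limiting distribution of $E_{\K}$, and $\xi \mapsto e^{-it\xi}$ is bounded continuous, Theorem \ref{thm:limitingdistribution} gives
\begin{equation*}
    \widehat{\mu_{\K}}(t) = \lim_{Y\to\infty} \frac{1}{Y}\int_{\log 2}^{Y} e^{-itE_{\K}(e^y)}\,dy.
\end{equation*}
Writing $\frac{2}{-\frac{1}{2}+i\gamma_n} = r_n e^{i\psi_n}$ with $r_n = 2/\sqrt{\tfrac{1}{4}+\gamma_n^2}$, Corollary \ref{cor:LamzouriProp2.1Gen} with $T=e^Y$ gives, uniformly for $\log 2 \le y \le Y$,
\begin{equation*}
    E_{\K}(e^y) = 1 + \sum_{0<\gamma_n<e^Y} r_n\cos(\gamma_n y + \psi_n) + O\!\left(\frac{e^{y/2}}{e^Y}\bigl(y + Y^2/y\bigr)\right).
\end{equation*}
The error term is negligible in $L^2$-mean, so it is enough to analyze the main sum.

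Second, for each fixed $N$, let
\begin{equation*}
    E_{\K,N}(e^y) = 1 + \sum_{n=1}^{N} r_n \cos(\gamma_n y + \psi_n),
\end{equation*}
and decompose the integrand as $e^{-itE_{\K}(e^y)} = e^{-itE_{\K,N}(e^y)} + \bigl(e^{-itE_{\K}(e^y)} - e^{-itE_{\K,N}(e^y)}\bigr)$. Using $|e^{iu}-e^{iv}|\le |u-v|$, Cauchy--Schwarz, and Parseval applied to the tail, the contribution of the remainder is $\ll |t|\bigl(\sum_{n>N} r_n^2\bigr)^{1/2}$ in $Y$-averaged $L^1$-norm; since $\sum_n 1/\gamma_n^2<\infty$ by Lemma \ref{lem:useful_for_later}, this vanishes as $N\to\infty$, uniformly in $Y$. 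Consequently, it suffices to compute
\begin{equation*}
    \lim_{Y\to\infty} \frac{1}{Y}\int_{0}^{Y} e^{-it E_{\K,N}(e^y)}\,dy.
\end{equation*}

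Third, invoke GLI. The positive ordinates $\gamma_1,\ldots,\gamma_N$ are linearly independent over $\mathbb{Q}$, so by Weyl's equidistribution theorem the trajectory $y\mapsto(\gamma_1 y+\psi_1,\ldots,\gamma_N y+\psi_N)\bmod 2\pi$ is uniformly distributed on $(\mathbb{R}/2\pi\mathbb{Z})^N$. Since $(\theta_1,\ldots,\theta_N)\mapsto \exp\bigl(-it\sum_n r_n\cos\theta_n\bigr)$ is continuous on the torus, the time average equals the space average:
\begin{equation*}
    \lim_{Y\to\infty} \frac{1}{Y}\int_{0}^{Y} e^{-it E_{\K,N}(e^y)}\,dy = e^{-it}\prod_{n=1}^{N}\frac{1}{2\pi}\int_0^{2\pi} e^{-itr_n\cos\theta_n}\,d\theta_n.
\end{equation*}
Each factor evaluates to $J_0(tr_n) = J_0\bigl(2t/\sqrt{\tfrac{1}{4}+\gamma_n^2}\bigr)$ by the standard integral representation $J_0(x)=\frac{1}{2\pi}\int_0^{2\pi} e^{ix\cos\theta}d\theta$ (even in $x$).

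Finally, let $N\to\infty$. Since $J_0(x)=1-x^2/4+O(x^4)$, the infinite product converges absolutely thanks to $\sum_n 1/\gamma_n^2<\infty$, yielding
\begin{equation*}
    \widehat{\mu_{\K}}(t) = e^{-it}\prod_{\gamma_n>0} J_0\!\left(\frac{2t}{\sqrt{\tfrac{1}{4}+\gamma_n^2}}\right).
\end{equation*}
The principal obstacle is the double-limit justification in the second step: one must control the tail $\sum_{n>N} r_n\cos(\gamma_n y+\psi_n)$ in a $Y$-uniform manner so that the $N\to\infty$ limit can be exchanged with the $Y\to\infty$ time-average. The $L^2$ tail bound together with the uniform Lipschitz estimate on $u\mapsto e^{-itu}$ is what makes this step work cleanly.
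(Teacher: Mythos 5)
Your proposal is essentially a self-contained re-derivation of what the paper handles in one line: the paper simply cites \cite[Thm.~1.9]{akbary2014limiting}, whose proof is precisely the Rubinstein--Sarnak-style argument you carry out (truncate $E_{\K}$ to finitely many zeros, apply GLI and Weyl equidistribution on the torus, evaluate each factor as $J_0$, and pass to the limit). So the route is the same; you have just unpacked the black box.

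One step you should tighten: the claim that ``Parseval applied to the tail'' gives a $Y$-uniform bound of $\ll |t|\bigl(\sum_{n>N}r_n^2\bigr)^{1/2}$ for the $Y$-averaged $L^1$-norm of $e^{-itE_{\K}}-e^{-itE_{\K,N}}$ is not literally Parseval. Over a finite window $[0,Y]$ the exponentials $e^{i\gamma_n y}$ are not orthogonal, and the mean square $\frac{1}{Y}\int_0^Y\bigl|\sum_{n>N}r_n e^{i\gamma_n y}\bigr|^2\,dy$ picks up off-diagonal terms $r_n\overline{r_m}\cdot O\bigl(1/(Y|\gamma_n-\gamma_m|)\bigr)$. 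What makes these harmless is the combination of the decay $|r_n|\ll 1/\gamma_n$ and the zero-counting bound of Lemma \ref{lem:useful_for_later} (at most $O(\log T)$ ordinates in $[T,T+1]$), which feed into a Montgomery--Vaughan/Gallagher-type mean-value estimate. This is exactly the content of the hypotheses in Theorem \ref{thm:akbarylimiting}/Corollary \ref{cor:akbarylimiting}, so your argument is correct, but you should either invoke that machinery explicitly or replace ``Parseval'' with the appropriate large-sieve-style inequality; otherwise the $Y$-uniformity you flag as ``the principal obstacle'' is not actually resolved by the sentence as written. Everything else — the identification of $r_n$ and $\psi_n$, the $e^{-it}$ factor from $c=1$, the Bessel integral $\frac{1}{2\pi}\int_0^{2\pi}e^{-itr_n\cos\theta}\,d\theta=J_0(tr_n)$, and the absolute convergence of the infinite product via $\sum\gamma_n^{-2}<\infty$ — is correct and matches the cited theorem.
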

The above proposition is an immediate consequence of Theorem 1.9 of \cite{akbary2014limiting}.

\begin{proposition} \label{prop:rv}
Assume GRH and GLI. Let $X(\gamma_n)$ be a sequence of independent random variables, arranged in increasing order of the positive imaginary parts ($\gamma_n$) of non-trivial zeros of $\zeta_{\K}(s)$, and uniformly distributed on the unit circle. Then $\mu_{\K}$ is the distribution of the random variable
\begin{equation}
    Z = 1 + 2\Re \sum_{\gamma_n > 0} \frac{X(\gamma_n)}{\sqrt{\invert{4}+{\gamma_n}^2}}.
\end{equation}
\end{proposition}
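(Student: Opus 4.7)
\smallskip

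\noindent\textbf{Proof proposal.} The natural plan is to compute the characteristic function of the random variable $Z$ and check that it agrees with $\widehat{\mu_{\K}}$ as given by Proposition \ref{prop:muFT}; since a probability measure on $\mathbb{R}$ is determined by its Fourier transform, this will identify the distribution of $Z$ with $\mu_{\K}$.

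\smallskip

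First I would verify that $Z$ is well defined as an almost-sure limit. Writing $c_n = 1/\sqrt{1/4+\gamma_n^2}$, each summand $2\Re(c_n X(\gamma_n))$ is mean-zero, bounded by $2c_n$, and has variance $2c_n^2$. By Lemma \ref{lem:useful_for_later} together with partial summation, $\sum_n c_n^2 \ll \sum_n 1/\gamma_n^2 < \infty$, so by Kolmogorov's two-series theorem the random series $\sum_n 2\Re(c_n X(\gamma_n))$ converges almost surely (and in $L^2$), and $Z$ is a genuine real-valued random variable.

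\smallskip

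The second and main computational step is to use independence of the $X(\gamma_n)$ to factorise
\begin{equation*}
    \mathbb{E}\bigl[e^{-itZ}\bigr]
    = e^{-it} \prod_{\gamma_n > 0} \mathbb{E}\bigl[\exp\bigl(-2it\,\Re(c_n X(\gamma_n))\bigr)\bigr].
\end{equation*}
Writing $X(\gamma_n) = e^{i\theta}$ with $\theta$ uniform on $[0,2\pi)$, the individual factor becomes
\begin{equation*}
    \frac{1}{2\pi}\int_0^{2\pi} \exp\bigl(-2itc_n\cos\theta\bigr)\,d\theta = J_0(2tc_n),
\end{equation*}
by the standard integral representation $J_0(z) = \tfrac{1}{2\pi}\int_0^{2\pi} e^{iz\cos\theta}\,d\theta$ together with the evenness of $J_0$. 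Substituting back gives
\begin{equation*}
    \mathbb{E}\bigl[e^{-itZ}\bigr]
    = e^{-it} \prod_{\gamma_n > 0} J_0\!\left(\frac{2t}{\sqrt{1/4+\gamma_n^2}}\right),
\end{equation*}
which coincides with $\widehat{\mu_{\K}}(t)$ from Proposition \ref{prop:muFT}.

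\smallskip

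The final step is to justify the interchange of expectation and infinite product: this is routine once the partial products are controlled, using $|J_0(z)| \le 1$ and dominated convergence (applied to the sequence of partial products of the factorised expectation, which converges pointwise since the partial sums of $Z$ converge almost surely). The only real obstacle is this convergence bookkeeping; after that, the uniqueness of characteristic functions immediately gives that $Z$ has distribution $\mu_{\K}$, completing the proof.
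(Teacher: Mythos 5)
Your proposal is correct. A caveat is that the paper does not actually write out a proof of Proposition \ref{prop:rv}: it simply remarks that the result is analogous to \cite[Prop.~4.2]{Lamzouri}, deferring the argument to that reference. What you have written is essentially the argument that reference carries out, spelled out from scratch: show $Z$ converges almost surely, compute its characteristic function by independence and the integral representation $J_0(z) = \tfrac{1}{2\pi}\int_0^{2\pi} e^{iz\cos\theta}\,d\theta$, match it against the Fourier transform $\widehat{\mu_{\K}}$ supplied by Proposition \ref{prop:muFT}, and invoke uniqueness of characteristic functions. Two small remarks worth noting. First, the paper's displayed integrand $e^{-it}$ in Proposition \ref{prop:muFT} has a typo (it should read $e^{-itx}\,d\mu_{\K}(x)$), and you have implicitly, and correctly, read it in the intended sense. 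Second, for the bookkeeping at the end, the cleanest phrasing is to apply dominated convergence directly to $e^{-itZ_N}\to e^{-itZ}$ (with $|e^{-itZ_N}|\le 1$) to get $\mathbb{E}[e^{-itZ_N}]\to\mathbb{E}[e^{-itZ}]$, and to note separately that the infinite product of $J_0$'s converges absolutely because $1 - J_0(2tc_n) \asymp t^2 c_n^2$ and $\sum_n c_n^2 < \infty$; your version says the same thing in slightly different words and is fine. Note also that GLI is not needed for the characteristic-function/uniqueness step itself — it is already absorbed into Proposition \ref{prop:muFT}, where it is what produces the factored $J_0$-product form. Your proof uses it only through that citation, which is exactly right.
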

Proposition \ref{prop:rv} generalises naturally from Proposition 4.2 of \cite{Lamzouri} by replacing the ordinates of the zeros of the Riemann zeta-function with the ordinates of the zeros of the Dedekind zeta-function.

Assuming the Riemann Hypothesis (RH) and the Linear Independence Hypothesis (LI) for the Riemann zeta-function, Rubinstein and Sarnak \cite{rubinstein1994chebyshev} showed that the limiting distribution of $(\pi(x)-\text{Li}(x)) (\log x)/\sqrt{x}$ is the distribution of the random variable
\begin{equation}
    \tilde{Z} = -1 + 2 \Re\sum_{\gamma_{\zeta} > 0} \frac{X(\gamma_{\zeta})}{\sqrt{\invert{4}+{\gamma_{\zeta}}^2}}
\end{equation}
where $\gamma_{\zeta}$ are the imaginary parts of the zeros of the Riemann zeta-function. As shown in the proof of Theorem 1.3 of \cite{Lamzouri}, $P(\tilde{Z} > 0)$ is the logarithmic density of the set of real numbers $x \ge 2$ for which $\pi(x) > \text{Li}(x)$. Therefore, $P(Z > 0) = 1-P(\tilde{Z} > 0)$ is the logarithmic density of the set of reals $x \ge 2$ for which $\pi(x) < \text{Li}(x)$. Similarly, from Proposition \ref{prop:rv}, we can also deduce that the logarithmic density of the set $\mathcal{M}_\K$ (assuming GRH and GLI) is the logarithmic density of the set of real numbers $x \ge 2$ for which $\pi_{\K}(x) < \text{Li}(x)$.

\subsection{Numerical investigations}
\label{subsec:numericals}

Let the logarithmic density of the set of real numbers $x \ge 2$ for which $\pi_{\K}(x) < \text{Li}(x)$ be $\delta(P_{\K})$, which is also equal to $\delta(\mathcal{M}_{\K})$, as argued above. For the classical case, one can find the relevant computations in \cite[Sec.~4]{rubinstein1994chebyshev}. Here, we will combine the analysis done for $\delta(P_1^\text{comp})$, $\delta(P_{5;N;R})$, and $\delta(P_{13;N;R})$ in \cite{rubinstein1994chebyshev} to find
\begin{equation*}
    \delta\paranthesis{P_{\mathbb{Q}(\sqrt{5})}}
    \quad\text{and}\quad
    \delta\paranthesis{P_{\mathbb{Q}(\sqrt{13})}} .
\end{equation*}
We can do this due to the following fact about the Dedekind zeta-function of a quadratic number field $\K = \mathbb{Q}(\sqrt{q})$ when $q \equiv 1\mod 4$ is a squarefree integer:
\begin{equation} \label{eqn:dedekindzetadecomp}
    \zeta_{\K}(s) = \zeta(s)L(s,\chi_{1,q}),
\end{equation}
where $\chi_{1,q}$ is the real non-principal character modulo $q$. This is a special case of the factorisation of the Dedekind zeta-function of an abelian number field into a product of Dirichlet $L$-functions (see \cite[Thm.~4.3]{washington1997introduction}, for example). Note that while it is possible to do a similar analysis when $q$ is squarefree and $q \not \equiv 1 \mod 4$, the modulus of the real non-principal character in that case would be $4q$ and we do not have the advantage of utilising the computational work done in \cite{rubinstein1994chebyshev}.

Let $f_{\K}(x)$ be the density function of $\mu_{\K}$. We, instead, consider $\omega_{\K}(x) := f_{\K}(x-1)$ which is symmetric about $x = 0$. Assuming GRH and GLI, from Proposition \ref{prop:muFT}, we know that its Fourier transform is given by
\begin{equation} \label{eqn:fouriertransformofdensity}
    \widehat{\omega_\K}(t) = \prod_{\gamma_n > 0} J_0\paranthesis{\frac{2t}{\sqrt{\invert{4}+{\gamma_n}^2}}},
\end{equation}
where, as in Proposition \ref{prop:rv}, we write the positive ordinates of the zeros of $\zeta_\K$ as $\gamma_n$. In fact, due to \eqref{eqn:dedekindzetadecomp}, the set of zeros with $\gamma_n > 0$ is the union of the sets of zeros with $\gamma_\zeta > 0$ and $\gamma_{\chi_{1,q}} > 0$ . Since \eqref{eqn:fouriertransformofdensity} is analogous to (4.1) in \cite{rubinstein1994chebyshev} and
\begin{equation} \label{eqn:fouriertransformfactorisation}
    \widehat{\omega_\K}(t) = \widehat{\omega_\zeta}(t) \widehat{\omega_{\chi_{1,q}}}(t),
\end{equation}
we will follow the analysis done in \cite{rubinstein1994chebyshev} for $\widehat{\omega_\zeta}(t)$ and $\widehat{\omega_{\chi_{1,q}}}(t)$.

Our objective is to evaluate the integral
\begin{equation} \label{eqn:logdensityintegral}
    \delta(P_\K) = \int_{-\infty}^1 d\omega_\K(t).
\end{equation}
Before continuing with the numerical investigations, we note that $J_0$ is an even function and hence $\widehat{\omega_\K}$ is symmetric about 0. This allows us to easily conclude the following result.
\begin{proposition}
Assume GRH and GLI. Then, for all number fields $\K$,
\begin{equation*}
    \delta(\mathcal{M}_{\K}) = \delta(P_{\K}) > \half.
\end{equation*}
\end{proposition}
Now, as in \cite[(4.2)]{rubinstein1994chebyshev}, \eqref{eqn:logdensityintegral} can be written as
\begin{equation} \label{eqn:logdensityFTintegral}
    \delta(P_\K) = \half + \invert{2 \pi} \int_{-\infty}^\infty \frac{\sin u}{u} \widehat{\omega_\K}(u) du.
\end{equation}
We want to replace the integral in \eqref{eqn:logdensityFTintegral} by a sum that can be evaluated easily. We do this with the help of the Poisson summation formula
\begin{equation} \label{eqn:integraltopoissonsum}
    \invert{2\pi} \int_{-\infty}^\infty \frac{\sin u}{u} \widehat{\omega_\K}(u) du = \epsilon \sum_{n \in \mathbb{Z}}  \varphi(\epsilon n) - \sum_{\substack{n \in \mathbb{Z} \\ n \neq 0}} \widehat{\varphi}\paranthesis{\frac{n}{\epsilon}}.
\end{equation}
where $\epsilon$ is a small number (to be chosen later) and
\begin{align}
    \varphi(u) &= \invert{2\pi} \frac{\sin u}{u} \widehat{\omega_\K}(u),\\
    \widehat{\varphi}(x) &= \half \int_{x-1}^{x+1} d\omega(u).
\end{align}
To estimate the error in replacing the integral in \eqref{eqn:logdensityFTintegral} by the first sum in \eqref{eqn:integraltopoissonsum}, we need a bound on $\widehat{\varphi}(n/\epsilon)$. Following the analysis in \cite{rubinstein1994chebyshev}, it is easy to verify here as well that the magnitude of the error is $< 10^{-20}$ with the choice of $\epsilon$ being $1/20$ for both $\K = \mathbb{Q}(\sqrt{5})$ and $\K = \mathbb{Q}(\sqrt{13})$. Therefore, we have
\begin{equation} \label{eqn:logdensityasinfinitesum}
    \delta(P_\K) = \half + \invert{2 \pi} \sum_{n \in \mathbb{Z}} \epsilon \frac{\sin \epsilon n}{\epsilon n} \widehat{\omega_\K}(\epsilon n) + \text{error}.
\end{equation}
Next, we need to replace the infinite sum $-\infty < n \epsilon < \infty$ with a finite sum $-C \le n\epsilon \le C$ and bound the error in this process. Analogous to \cite[(4.9)]{rubinstein1994chebyshev}, the magnitude of this error is bounded above by
\begin{equation} \label{finitesumerror}
    \frac{\prod_{j=1}^M \paranthesis{\invert{4}+{\gamma_j}^2}^{1/4}}{\pi^{M/2+1}} \paranthesis{\frac{2}{MC^{M/2}}+\invert{20C^{M/2+1}}}
\end{equation}
where $\gamma_j$'s are the ordinates of the zeros of $\zeta_\K$ indexed in increasing order. To generate these zeros, we used Rubinstein's $L$-function calculator in SageMath. For $\K = \mathbb{Q}(\sqrt{5})$, choosing $C = 25$ and $M = 42$, the magnitude of the error is $< 3 \times 10^{-10}$, and for $\K = \mathbb{Q}(\sqrt{13})$, choosing $C = 25$ and $M = 53$, the magnitude of the error is $< 7 \times 10^{-13}$. Therefore, as in \cite[(4.10)]{rubinstein1994chebyshev}, we obtain
\begin{equation} \label{logdensitywithfinitesum}
    \delta(P_\K) = \invert{2 \pi} \sum_{-25 \le n\epsilon \le 25} \epsilon \frac{\sin \epsilon n}{\epsilon n} \prod_{\gamma_n > 0} J_0\paranthesis{\frac{2n\epsilon}{\sqrt{\invert{4}+{\gamma_n}^2}}} + \half + \text{error}.
\end{equation}
Finally, we would like to replace the infinite product in \eqref{logdensitywithfinitesum} with a finite product. In order to do so, we need to introduce a compensating polynomial, $p(t)$ that accounts for the tail of the infinite product:
\begin{equation} \label{eqn:omegaasfiniteproduct}
    \widehat{\omega_\K}(t) = p(t) \prod_{0 < \gamma_n \le X} J_0\paranthesis{\frac{2t}{\sqrt{\invert{4}+{\gamma_n}^2}}} + \text{error}
\end{equation}
for $-C \le t \le C$, where $p(t) = \sum_{m = 0}^A b_m t^{2m}$, and
\begin{equation} \label{eqn:tailofinfiniteproduct}
    \prod_{\gamma_n > X} J_0\paranthesis{\frac{2t}{\sqrt{\invert{4}+{\gamma_n}^2}}} = \sum_{m = 0}^\infty b_m t^{2m}.
\end{equation}
We choose $A = 1$ and $X = 9999$. From the definition of $J_0$ and \eqref{eqn:tailofinfiniteproduct}, we find that $b_0 = 1$ and
\begin{equation}
    b_1 = - \paranthesis{\sum_{\gamma_n > 0} - \sum_{0 < \gamma_n \le X}} \invert{\invert{4}+{\gamma_n}^2}.
\end{equation}
We can evaluate the first sum of $b_1$ using (4.13)--(4.14) and Table 2 of \cite{rubinstein1994chebyshev}. The second sum was computed using Python. This works out to be
\begin{align*}
    b_1 &= -0.000292143\ldots \text{ for } \K = \mathbb{Q}(\sqrt{5})  \text{ and}\\
    b_1 &= -0.000307347\ldots \text{ for } \K = \mathbb{Q}(\sqrt{13}).
\end{align*}

As in \cite{rubinstein1994chebyshev}, the magnitude of the error in \eqref{eqn:omegaasfiniteproduct} is bounded by
\begin{equation} \label{producterrorbound}
    \invert{2\pi} \sum_{-C \le n\epsilon \le C} \epsilon \frac{\abs{\sin n \epsilon}}{\abs{n \epsilon}} \prod_{0 < \gamma_n \le X} \abs{J_0\paranthesis{\frac{2n\epsilon}{\sqrt{\invert{4}+{\gamma_n}^2}}}} \cdot 2 \frac{(T_1 n^2 \epsilon^2)^{A+1}}{(A+1)!},
\end{equation}
where $T_1 = \sum_{\gamma_K > 0} \paranthesis{\invert{4}+{\gamma_n}^2}^{-1}$. For $\K = \mathbb{Q}(\sqrt{5})$, \eqref{producterrorbound} evaluates to $< 7.3 \times 10^{-7}$ and for $\K = \mathbb{Q}(\sqrt{13})$, it evaluates to $< 2 \times 10^{-7}$. Therefore, we finally obtain
\begin{equation}
    \delta(P_\K) = \invert{2\pi} \sum_{-25 \le n\epsilon \le 25} \epsilon \frac{\sin(n\epsilon)}{n\epsilon} (1+b_1 (n\epsilon)^2) \cdot \prod_{0 < \gamma_n \le 9999} J_0\paranthesis{\frac{2n\epsilon}{\sqrt{\invert{4}+{\gamma_n}^2}}} + \half + \text{error},
\end{equation}
with the magnitude of the error being $< 10^{-6}$. We summarise the results in the table below.

\begin{table}[h] 
\centering
\caption{Logarithmic density for $\K = \mathbb{Q}(\sqrt{q})$}
\label{table:logdensity}
\begin{tabular}{|c|c|}
\hline
$q$ & $\delta(P_\K)$ \\
\hline
5 & 0.9876\ldots \\
\hline
13 & 0.9298\ldots \\
\hline
\end{tabular}
\end{table}

\subsection{Dissipation of bias}
\label{subsec:dissipate}

A natural question that arises from the preceding discussion is about the keenness of the bias as $q \to \infty$. For $q = 5$ and $q = 13$, the bias is quite sharp and while it need not be indicative of the overall trend at all, the bias for $q = 13$ is less keen compared to $q = 5$. We will show that, in fact, as $q$ becomes large, the bias completely dissipates and $\delta(P_\K)$ tends to 1/2. For the following discussion, $q = p$ or $2p$, where $p$ is an odd prime. Note that we do not restrict $q$ to be $q \equiv 1 \mod 4$. Indeed, if $q \not \equiv 1 \mod 4$ and $q$ is squarefree, the discriminant of the quadratic field is $4q$ and hence, the $L$-function in \eqref{eqn:dedekindzetadecomp} is $L(s,\chi_{1,4q})$ and so, if $q \to \infty$, $4q \to \infty$ as well. Theorem 1.5 of \cite{rubinstein1994chebyshev} implies that $\delta(P_{q;N;R}) \to \half$ as $q \to \infty$.

We proceed in a similar manner as in \cite[\S~3.2]{rubinstein1994chebyshev}. Recall that Proposition \ref{prop:muFT} gives us
\begin{equation*}
    \widehat{\mu_\K}(t) = e^{-it} \prod_{\gamma_n > 0} J_0\paranthesis{\frac{2t}{\sqrt{\invert{4}+{\gamma_n}^2}}} = e^{it} \cdot \widehat{\mu_\zeta}(t) \cdot \widehat{\mu_{\chi_{1,q}}}(t).
\end{equation*}
We consider $\log \widehat{\mu_\K} (\xi/\sqrt{\log q})$. As in \cite[(3.5)]{rubinstein1994chebyshev}, for a large fixed constant $A$ and $|\xi| \le A$, we can write
\begin{equation} \label{eqn:logmuFTnormal}
    \log \widehat{\mu_\K} \left(\frac{\xi}{\sqrt{\log q}}\right) = -i \frac{\xi}{\sqrt{\log q}} - \frac{\xi^2}{\log q} \sum_{\gamma_n > 0} \invert{\invert{4}+{\gamma_n}^2} + O\paranthesis{\frac{A^4}{\log^2 q} \sum_{\gamma_n > 0} \invert{\paranthesis{\invert{4}+{\gamma_n}^2}^2} },
\end{equation}
where $\gamma_n$ are the ordinates of the zeros of $\zeta_\K$. For our discussion here, $K = \QQ(\sqrt{q})$ and so the set of ordinates will be the union of the set of ordinates of zeros of the Riemann zeta-function and the relevant $L$-function. Since the sum over positive ordinates of zeros of the Riemann zeta-function
\begin{equation*}
    \sum_{\gamma_\zeta > 0} \invert{\invert{4}+{\gamma_\zeta}^2}
\end{equation*}
converges, we can conclude that (as in \cite[\S~3.2]{rubinstein1994chebyshev})
\begin{equation*}
    \sum_{\gamma_n > 0} \invert{\invert{4}+{\gamma_n}^2} = \half \log q + O(\log \log q),
\end{equation*}
taking into account the contribution from the zeros of the corresponding $L$-function. Therefore, \eqref{eqn:logmuFTnormal} becomes
\begin{equation*}
    \log \widehat{\mu_\K} \left(\frac{\xi}{\sqrt{\log q}}\right) = -\half \xi^2 + O \paranthesis{\frac{A}{\sqrt{\log q}} + \frac{A^2 \log\log q}{\log q} + \frac{A^4}{\log q}}.
\end{equation*}
From this, we see that, for $|\xi| \le A$, 
$$\widehat{\mu_\K} \left(\frac{\xi}{\sqrt{\log q}}\right)$$ 
approaches $e^{-\xi^2/2}$ uniformly. We can now use L\'evy's theorem \cite{levy1922determination} to infer that the probability measure $\tilde{\mu}_\K$ corresponding to the normalised limiting distribution 
$$\frac{E_\K(x)}{\sqrt{\log q}}$$ 
converges in measure to the standard Gaussian. Since $\delta(P_\K) = \tilde{\mu}_\K(-\infty,0]$, we can conclude that 
$$\delta(P_\K) = \delta(\mathcal{M}_{\K}) \to \half$$
as $q \to \infty$ ($q = p$ or $2p$).


\bibliographystyle{amsplain}
\bibliography{references}

\end{document}